\theoremstyle{plain}
\newtheorem{thm}{Theorem}[section]
\newtheorem{lem}[thm]{Lemma}
\newtheorem{prop}[thm]{Proposition}
\theoremstyle{definition}
\newtheorem{rem}[thm]{Remark}
\newtheorem{concl}[thm]{Conclusion}
\begin{document} 

\title[On Leinster groups of order $pqrs$]{On Leinster groups of order $pqrs$} 

\author[S. J. Baishya ]{Sekhar Jyoti Baishya} 
\address{S. J. Baishya, Department of Mathematics, Pandit Deendayal Upadhyaya Adarsha Mahavidyalaya, Behali, Biswanath-784184, Assam, India.}

\email{sekharnehu@yahoo.com}




\begin{abstract}
A finite group   is said to be a Leinster group if the sum of the orders of its normal subgroups equals twice the order of the group itself. Let $p<q<r<s$ be primes. We prove that if $G$ is a Leinster group of order $p^2qr$, then $G \cong Q_{20}\times C_{19}$ or $Q_{28} \times C_{13}$. We also prove that no group of order $pqrs$ is Leinster.
\end{abstract}

\subjclass[2010]{11A25, 20D60, 20E99}
\keywords{finite groups, perfect numbers, Leinster groups}

\maketitle

\section{Introduction and Preliminary results} \label{S:intro}
A number is perfect if the sum of its divisors equals twice the number itself. In 2001, T. Leinster \cite{leinster}, developed and studied a group theoretic analogue of perfect numbers. A finite group  is said to be a perfect group (not to be confused with the one which is equal to its commutator subgroup) or an immaculate group or a Leinster group if the sum of the orders of its normal subgroups equals twice the order of the group itself. The Leinster groups have a beautiful connection with the perfect numbers. Obviously, in the case of cyclic groups and less obviously in the case of dihedral groups. Clearly, a finite cyclic group $C_n$ is Leinster if and only if its order $n$ is a perfect number. In fact, the nilpotent Leinster groups are precisely the finite cyclic groups whose orders are perfect numbers. On the other hand, the Leinster dihedral groups are in one to one correspondence with odd perfect numbers. It may be mentioned here that, till now it is not known whether there are infinitely many Leinster groups or not. Another interesting fact is that upto now, only one odd order Leinster group is known, namely $(C_{127} \rtimes C_7) \times C_{3^4.{11}^2.{19}^2.113}$. It was discovered by F. Brunault, just one day after the question on existence of odd order Leinster groups was asked by Tom Leinster in Mathoverflow \cite{brunault}.  More information on this and the related concepts can be found in the works of S. J. Baishya \cite{baishya1}, S. J. Baishya and A. K. Das \cite{baishya2}, A. K. Das \cite{das}, M. T$\check{\rm a}$rn$\check{\rm a}$uceanu \cite{t1, t2},  T. D. Medts and A. Mar$\acute{\rm o}$ti \cite{maroti}, etc.

Given a finite group $G$, $\sigma(G), \tau(G), G'$ and $Z(G)$ denotes   the sum of the orders of the normal subgroups, the number of normal subgroups , the commutator subgroup, and the center of $G$ respectively. For any prime $l$, we have used the symbol $T_l$ to denote a $l$-Sylow subgroup of a group $G$.

The following theorems will be  used repeatedly to obtain our results:

\begin{thm}{(Theorem 4.1 \cite{leinster})} \label{aqt}
If $G$ is a group with $\sigma(G) \leq 2\mid G \mid$, then any abelian quotient of $G$ is cyclic. 
\end{thm}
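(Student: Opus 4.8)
The plan is to reduce everything to the abelianization. Every abelian quotient $G/N$ is one with $G' \leq N$, so it is a quotient of $G/G'$, and since quotients of cyclic groups are cyclic it suffices to prove that the abelianization $A := G/G'$ is cyclic. I will argue by contraposition: assuming $A$ is non-cyclic, I will derive $\sigma(G) > 2|G|$, contradicting the hypothesis.

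First I would transfer the hypothesis down to $A$. By the correspondence theorem the subgroups of $A$ are exactly the subgroups of $G$ containing $G'$, and since $A$ is abelian each such subgroup is automatically normal in $G$. If $K$ runs over these subgroups then $|K| = |G'| \cdot |K/G'|$, so summing over all subgroups $\bar H = K/G'$ of $A$ gives
\[
\sigma(G) \;\geq\; \sum_{G' \leq K \trianglelefteq G} |K| \;=\; |G'| \sum_{\bar H \leq A} |\bar H| \;=\; |G'|\,\sigma(A),
\]
where $\sigma(A)$ denotes the sum of the orders of all (necessarily normal) subgroups of $A$. Since $|G| = |G'|\,|A|$, the hypothesis $\sigma(G) \leq 2|G|$ forces $\sigma(A) \leq 2|A|$. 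Thus the whole theorem follows from the purely arithmetic claim: \emph{a finite non-cyclic abelian group $A$ satisfies $\sigma(A) > 2|A|$.}

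To establish this key lemma I would first reduce to $p$-groups. Writing $A = \prod_p A_p$ as the direct product of its Sylow subgroups, each subgroup of $A$ splits as a product over the coprime factors, so $\sigma$ is multiplicative: $\sigma(A)/|A| = \prod_p \sigma(A_p)/|A_p|$. Each nontrivial cyclic factor contributes a ratio $\geq 1$, while $A$ being non-cyclic means some $A_p$ is a non-cyclic abelian $p$-group. Hence it is enough to show $\sigma(P) > 2|P|$ for every non-cyclic abelian $p$-group $P$, say $|P| = p^n$ with $n \geq 2$. A non-cyclic $P$ has rank $m \geq 2$, so its socle is $C_p^m$ and $P/\Phi(P) \cong C_p^m$; counting one-dimensional subspaces shows $P$ has at least $\tfrac{p^2-1}{p-1} = p+1$ subgroups of order $p$ and, dually, at least $p+1$ subgroups of index $p$. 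For $n = 2$ this already gives $\sigma(P) \geq 1 + (p+1)p + p^2 = 2p^2 + p + 1 > 2|P|$; for $n \geq 3$ the subgroups of order $p$ and of order $p^{n-1}$ form disjoint families, so
\[
\sigma(P) \;\geq\; 1 + (p+1)p + (p+1)p^{n-1} + p^n \;=\; 2p^n + p^{n-1} + p^2 + p + 1 \;>\; 2|P|,
\]
as required.

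Combining these steps, the non-cyclic factor forces $\sigma(A)/|A| > 2$, contradicting $\sigma(A) \leq 2|A|$; hence $A = G/G'$ is cyclic, and the theorem follows. The only delicate point is the lemma on abelian groups, and within it the bookkeeping of the subgroup counts (the socle and Frattini ranks, and the duality between subgroups of order $p$ and of index $p$). Once those counts are in hand the inequality is immediate, so I expect no serious obstacle beyond keeping the cases $n = 2$ and $n \geq 3$ separate to avoid double-counting the minimal and maximal subgroups.
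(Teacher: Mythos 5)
Your proof is correct, but note that the present paper does not prove this statement at all---it is quoted as Theorem 4.1 of Leinster's cited paper---so the relevant comparison is with Leinster's original argument, and yours takes a genuinely heavier route. Leinster's proof runs: if some abelian quotient of $G$ is non-cyclic, then $G$ has a quotient isomorphic to $C_p \times C_p$ for some prime $p$ (every non-cyclic finite abelian group surjects onto such a group); by the correspondence theorem the $p+1$ subgroups of order $p$ in $C_p \times C_p$ pull back to $p+1$ normal subgroups of $G$ of index $p$, whence $\sigma(G) \geq |G| + (p+1)\frac{|G|}{p} = 2|G| + \frac{|G|}{p} > 2|G|$, a contradiction. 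Your argument contains exactly this computation as the case $n=2$ of your $p$-group lemma, but you prove considerably more: the standalone inequality $\sigma(A) > 2|A|$ for every non-cyclic abelian $A$, which forces you through the Sylow decomposition, the multiplicativity of $\sigma$, and the socle/Frattini rank counts for arbitrary $n$. All of those steps check out---in particular the transfer $\sigma(G) \geq |G'|\,\sigma(G/G')$ is valid because every subgroup containing $G'$ is normal in $G$, and your separation of the cases $n=2$ and $n\geq 3$ correctly avoids double-counting when minimal and maximal subgroups coincide. The one simplification you missed is that passing to a $C_p \times C_p$ quotient makes the reduction to the abelianization and the general $p$-group analysis unnecessary. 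What your approach buys in exchange is a stronger arithmetic fact about abelian groups (and a proof of Theorem \ref{alg}'s "only if" direction essentially for free); what Leinster's buys is brevity, needing only one subgroup count in one small group.
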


\begin{thm}{(Corollary 4.2 \cite{leinster})} \label{alg}
The abelian Leinster  groups are precisely the cyclic groups $C_n$ of order $n$ with $n$ perfect. 
\end{thm}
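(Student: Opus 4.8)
The plan is to prove the two inclusions of the characterization separately, letting Theorem~\ref{aqt} do the essential work for the nontrivial direction.

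First I would dispose of the easy inclusion: every cyclic group $C_n$ with $n$ a perfect number is Leinster. The point is that in a finite cyclic group every subgroup is normal, and for each divisor $d$ of $n$ there is exactly one subgroup of order $d$. Thus the normal subgroups of $C_n$ are in bijection with the divisors of $n$, and
\[
\sigma(C_n) = \sum_{d \mid n} d .
\]
The right-hand side is precisely the classical sum-of-divisors function evaluated at $n$, so $\sigma(C_n) = 2n = 2\,|C_n|$ holds exactly when $n$ is perfect. Hence $C_n$ is Leinster whenever $n$ is perfect; the same computation is recycled for the converse.

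For the reverse inclusion, suppose $G$ is an abelian Leinster group, so that $\sigma(G) = 2\,|G|$ and in particular $\sigma(G) \le 2\,|G|$. The key observation is that a group is trivially an abelian quotient of itself: taking the trivial normal subgroup gives $G \cong G/\{e\}$, which is an abelian quotient of $G$. Theorem~\ref{aqt} then forces every abelian quotient of $G$ to be cyclic, and applied to $G/\{e\}$ this yields that $G$ itself is cyclic, say $G \cong C_n$. Feeding this back into the divisor computation from the first part, the Leinster condition $\sigma(C_n) = 2n$ reads $\sum_{d \mid n} d = 2n$, i.e.\ $n$ is perfect. Combining the two inclusions gives the stated characterization.

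I do not anticipate a genuine obstacle here, since the heavy lifting is delegated to Theorem~\ref{aqt}. The only step that requires a moment of care is recognizing that $G$ is an abelian quotient of itself, which is exactly what converts the statement about quotients in Theorem~\ref{aqt} into a statement about $G$ directly; without that reduction one might be tempted to argue about the structure of finite abelian groups by hand. The residual tasks—verifying the one-to-one correspondence between subgroups of $C_n$ and divisors of $n$, and identifying $\sigma(C_n)$ with the arithmetic sum-of-divisors function—are routine.
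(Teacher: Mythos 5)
Your proof is correct and follows exactly the intended route: the paper states this result as a citation of Leinster's Corollary 4.2, which in the original source is derived from Theorem 4.1 (the paper's Theorem~\ref{aqt}) in precisely the way you do---apply it to the quotient $G/\{e\}$ to get cyclicity, then identify $\sigma(C_n)$ with the arithmetic sum-of-divisors function. Nothing is missing; the observation that $G$ is an abelian quotient of itself is indeed the one step that makes the argument go through.
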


\begin{thm}{(Proposition 3.1, Corollary 3.2 \cite{leinster})} \label{mul}
If $G_1$ and $G_2$ be two groups with $(\mid G_1 \mid , \mid G_2 \mid)=1$, then $\tau(G_1 \times G_2)=\tau(G_1)\tau(G_2)$ and $\sigma(G_1 \times G_2)=\sigma(G_1)\sigma(G_2)$. 
\end{thm}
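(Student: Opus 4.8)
The plan is to reduce both identities to a single structural fact: when $\gcd(|G_1|,|G_2|)=1$, every normal subgroup of $G_1\times G_2$ splits as a direct product $N_1\times N_2$ with $N_1\trianglelefteq G_1$ and $N_2\trianglelefteq G_2$. Once this is in hand, I would set up the correspondence $(N_1,N_2)\mapsto N_1\times N_2$ between pairs of normal subgroups of the two factors and normal subgroups of the product, and read off both formulas from it.

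First I would prove the structural fact. Let $N\trianglelefteq G_1\times G_2$ and write $m=|G_1|$, $n=|G_2|$, so that $\gcd(m,n)=1$. The key observation is that for any $(g_1,g_2)\in N$ we have $(g_1,g_2)^n=(g_1^n,e)\in N$, because the order of $g_2$ divides $n$. Since the order of $g_1$ divides $m$ and is therefore coprime to $n$, raising $g_1^n$ to a suitable power recovers $g_1$; as $N$ is closed under taking powers, this yields $(g_1,e)\in N$, and symmetrically $(e,g_2)\in N$. Setting $N_1=\{g_1: (g_1,e)\in N\}$ and $N_2=\{g_2:(e,g_2)\in N\}$, normality of $N$ forces $N_i\trianglelefteq G_i$, and the observation above shows $N\subseteq N_1\times N_2$; the reverse inclusion is immediate since $(g_1,g_2)=(g_1,e)(e,g_2)$. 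Hence $N=N_1\times N_2$.

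With the structural fact established, the map $(N_1,N_2)\mapsto N_1\times N_2$ is a bijection from pairs of normal subgroups onto normal subgroups of $G_1\times G_2$: it is surjective by the above, and injective because the two coordinate projections recover $N_1$ and $N_2$ from $N_1\times N_2$. Counting both sides gives $\tau(G_1\times G_2)=\tau(G_1)\tau(G_2)$ at once. For $\sigma$, I would sum orders over this bijection and use $|N_1\times N_2|=|N_1|\,|N_2|$ to factor the resulting double sum:
\[
\sigma(G_1\times G_2)=\sum_{N_1\trianglelefteq G_1}\sum_{N_2\trianglelefteq G_2}|N_1|\,|N_2|=\Bigl(\sum_{N_1\trianglelefteq G_1}|N_1|\Bigr)\Bigl(\sum_{N_2\trianglelefteq G_2}|N_2|\Bigr)=\sigma(G_1)\sigma(G_2).
\]
The only real obstacle is the splitting of $N$; once coprimality is exploited to extract $(g_1,e)$ and $(e,g_2)$ from an arbitrary element of $N$, everything else is bookkeeping. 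An alternative route would invoke Goursat's lemma, noting that the common quotient $\pi_1(N)/(N\cap G_1)\cong\pi_2(N)/(N\cap G_2)$ has order dividing both $m$ and $n$ and is hence trivial, but the elementary power argument seems cleaner and self-contained.
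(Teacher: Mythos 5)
Your proof is correct. The paper itself gives no proof of this statement---it is imported from Leinster's paper (Proposition 3.1 and Corollary 3.2 of \cite{leinster})---and your argument is essentially the standard one found there: establish that coprimality forces every normal subgroup $N \trianglelefteq G_1 \times G_2$ to split as $N_1 \times N_2$ (your power trick extracting $(g_1,e)$ and $(e,g_2)$ is a clean, self-contained way to do this), then obtain both $\tau(G_1 \times G_2)=\tau(G_1)\tau(G_2)$ and $\sigma(G_1 \times G_2)=\sigma(G_1)\sigma(G_2)$ from the resulting bijection $(N_1,N_2)\mapsto N_1\times N_2$.
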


\begin{thm} {(Proposition 3.1, Theorems 3.4, 3.5, 3.7 \cite{baishya1})} \label{sjb}
If $G (\neq C_7 \rtimes C_8)$ is a Leinster group of order $pqrs$, $p, q, r, s$ being primes (not necessarily distinct), then $\tau(G)>7$. 
\end{thm}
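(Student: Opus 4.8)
The plan is to argue by contradiction: suppose $G$ is a Leinster group whose order is a product of four primes counted with multiplicity, that $G \not\cong C_7 \rtimes C_8$, and that $\tau(G) \le 7$, and then derive a contradiction. Write $\Omega(n)$ for the number of prime factors of $n$ with multiplicity, so $\Omega(\mid G\mid) = 4$ and $\mid G\mid \ge 16$. Dividing the Leinster identity $\sigma(G) = 2\mid G\mid$ by $\mid G\mid$ gives $\sum_{N \trianglelefteq G} 1/[G:N] = 2$; discarding the terms for $N = G$ and $N = \{e\}$, the $k := \tau(G) - 2 \le 5$ proper nontrivial normal subgroups $N_1, \dots, N_k$ satisfy $\sum_i \mid N_i\mid = \mid G\mid - 1$. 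Two facts will be used constantly. By Theorem \ref{aqt}, $G/G'$ is cyclic; hence the normal subgroups containing $G'$ biject with the divisors of $[G:G']$, and in particular there is at most one normal subgroup of any given prime index $p$ (its quotient $C_p$ is an index-$p$ subgroup of the cyclic $G/G'$). Also, the normal subgroups are closed under $(N_1, N_2) \mapsto N_1 N_2$ and $N_1 \cap N_2$, with $\mid N_1\mid\,\mid N_2\mid = \mid N_1 N_2\mid\,\mid N_1 \cap N_2\mid$; with at most five proper nontrivial normal subgroups available, this closure severely restricts the normal-subgroup lattice.

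First I would pin down the structure of $G$. By Theorem \ref{alg} an abelian Leinster group is cyclic of perfect order, and no perfect number has $\Omega = 4$ (an even perfect number $2^{p-1}(2^p-1)$ has $\Omega = p$, a prime, and an odd one would have far more prime factors), so $G$ is nonabelian. The only nonabelian simple group whose order has $\Omega = 4$ is $A_5$ (of order $60 = 2^2\cdot 3\cdot 5$), and $\sigma(A_5) = 61 \ne 120$; thus $G$ is not simple, so $\tau(G) \ge 3$. Moreover, by additivity of $\Omega$ any nonabelian composition factor would use up all four prime factors and force $G$ to equal that simple group; since $G$ is non-simple it is therefore solvable, and $G/G'$ is a nontrivial cyclic group. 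The cases $\tau(G) = 3, 4$ fall quickly: $\tau(G) = 3$ forces a proper normal subgroup of order $\mid G\mid - 1 > \mid G\mid/2$, impossible for a proper divisor; and for $\tau(G) = 4$ the two proper nontrivial normal subgroups either form a chain, where $\mid N_1\mid + \mid N_2\mid \le \mid G\mid/4 + \mid G\mid/2 < \mid G\mid - 1$, or are incomparable, where closure forces $N_1 \cap N_2 = \{e\}$ and $N_1 N_2 = G$, so $G = N_1 \times N_2$ and $(\mid N_1\mid - 1)(\mid N_2\mid - 1) = 2$, giving $\mid G\mid = 6$ and contradicting $\Omega(\mid G\mid) = 4$.

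The substance is the range $\tau(G) \in \{5, 6, 7\}$, i.e. three, four, or five proper nontrivial normal subgroups. A chain of $k$ such subgroups needs length $k+1 \le \Omega(\mid G\mid) = 4$, so for $\tau(G) \ge 6$ the normal subgroups cannot be pairwise comparable and an incomparable pair is forced; when $\tau(G) = 5$ a chain is still possible, but its orders are bounded by $\mid G\mid/2, \mid G\mid/4, \mid G\mid/8$, which sum to at most $7\mid G\mid/8 < \mid G\mid - 1$. In every remaining case there is an incomparable pair, so the relation $\mid N_1\mid\,\mid N_2\mid = \mid N_1 N_2\mid\,\mid N_1 \cap N_2\mid$ applies and turns the data into a system of multiplicative divisor equations to be solved prime-by-prime across the five shapes $p^4, p^3 q, p^2 q^2, p^2 q r, p q r s$ of $\mid G\mid$. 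For each surviving arithmetic solution I would test group-theoretic realizability using Sylow's theorems (the number $n_\ell$ of Sylow $\ell$-subgroups satisfies $n_\ell \equiv 1 \pmod \ell$ and $n_\ell \mid [G:T_\ell]$, which fixes which Sylow subgroups are normal and hence which orders must occur) and, when a normal Sylow has a normal complement, splitting off a coprime direct factor via Theorem \ref{mul}. This eliminates every candidate except the multiset of orders $\{2, 4, 7, 14, 28\}$, which sums to $55 = 56 - 1$ and is realized exactly by $C_7 \rtimes C_8$, the stated exception.

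The main obstacle is the case $\tau(G) = 7$ (and, to a lesser extent, $\tau(G) = 6$): the equation $\sum_i \mid N_i\mid = \mid G\mid - 1$ with up to five proper-divisor summands has many purely numerical solutions, and the work lies in discarding the spurious ones. The decisive constraints are that at most one summand equals $\mid G\mid/2$ (at most one index-$2$ normal subgroup, since $G/G'$ is cyclic) and that lattice closure keeps manufacturing intermediate normal subgroups: a given configuration either produces more than five proper nontrivial normal subgroups, overshooting $\tau(G) \le 7$, or contradicts the Sylow congruences. Carrying this bookkeeping faithfully through all five shapes of $\mid G\mid$, especially the Sylow-rich $p^2 q r$ and $p q r s$, is where essentially all of the casework resides; the reward is that the only genuinely realizable solution is $C_7 \rtimes C_8$, which establishes the theorem.
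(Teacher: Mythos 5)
This statement is not proved in the paper at all: it is imported wholesale from \cite{baishya1} (Proposition 3.1, Theorems 3.4, 3.5, 3.7), so there is no in-paper proof to measure you against. Judged on its own terms, your attempt has a sound skeleton but a genuine gap where the theorem's actual content lives. The framework you set up is correct and useful: $G$ is nonabelian (no perfect number has $\Omega=4$), no simple group is Leinster since $\sigma(G)=1+\mid G\mid$, solvability follows because a nonabelian composition factor would need $\Omega\geq 4$ (using Burnside's $p^aq^b$ theorem and solvability of squarefree-order groups), $G/G'$ cyclic gives at most one normal subgroup of each prime index, and the lattice closure under products and intersections plus the chain-length bound $k+1\leq\Omega(\mid G\mid)$ correctly dispose of $\tau(G)=3,4$ and the chain subcase of $\tau(G)=5$.

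But the cases $\tau(G)\in\{5,6,7\}$ with an incomparable pair --- which you yourself identify as ``where essentially all of the casework resides'' --- are never actually carried out. You describe a method (turn the identity $\sum_i\mid N_i\mid=\mid G\mid-1$ into divisor equations, then test realizability via Sylow congruences and coprime splittings) and then simply assert its outcome: ``This eliminates every candidate except the multiset $\{2,4,7,14,28\}$.'' That assertion is the theorem. Since the statement ranges over infinitely many prime tuples across five factorization shapes ($p^4$, $p^3q$, $p^2q^2$, $p^2qr$, $pqrs$), eliminating the spurious arithmetic solutions requires deriving and refuting explicit Diophantine constraints (of the kind the present paper grinds through in Sections 2 and 3 for the easier statements it does prove, e.g.\ case-by-case analysis of $\mid G'\mid$), and none of that appears in your write-up. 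Until that casework is exhibited, what you have is a plausible proof strategy plus a restatement of the conclusion, not a proof; the missing content is exactly what Proposition 3.1 and Theorems 3.4, 3.5, 3.7 of \cite{baishya1} supply.
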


\begin{thm} {(Lemma 4, p. 303 \cite{zumud}}) \label{zumud1}
If a finite group $G$ has an abelian normal subgroup of prime index $p$, then $\mid G \mid=p \mid G' \mid \mid Z(G) \mid$. 
\end{thm}

\begin{thm} {(Theorem15 \cite{sqf}}) \label{sqfree}
Let $G$ be a finite group, and let $p$ be the smallest prime divisor of $\mid G \mid$. Let $Q$ be a $p$-Sylow subgroup of $G$. If $Q$ is cyclic, then $Q$ has a normal complement in $G$.
\end{thm}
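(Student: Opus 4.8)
The plan is to deduce the result from Burnside's classical normal $p$-complement theorem, whose hypothesis I will verify using the two arithmetic constraints built into the statement: that $Q$ is cyclic and that $p$ is the \emph{smallest} prime dividing $|G|$. Recall that Burnside's theorem produces a normal $p$-complement (equivalently, a normal complement to $Q$) as soon as $Q$ lies in the centre of its own normalizer, i.e. $N_G(Q)=C_G(Q)$. The engine behind that theorem is the transfer homomorphism $V\colon G\to Q$, which is well defined here because $Q$, being cyclic, is abelian (so $Q/Q'=Q$). Under the condition $N_G(Q)=C_G(Q)$ a fusion argument shows that the restriction $V|_Q$ is the power map $x\mapsto x^{[G:Q]}$; since $[G:Q]$ is prime to $|Q|$ this is an automorphism of $Q$, whence $V$ is surjective and $\ker V$ is a normal subgroup meeting $Q$ trivially, i.e. a normal complement to $Q$. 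Thus the entire proof reduces to establishing $N_G(Q)=C_G(Q)$.

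First I would examine the section $H:=N_G(Q)/C_G(Q)$, which embeds into $\Aut(Q)$. Writing $|Q|=p^{n}$, cyclicity gives $|\Aut(Q)|=\varphi(p^{n})=p^{\,n-1}(p-1)$, so $|H|$ divides $p^{\,n-1}(p-1)$. On the other hand, since $Q$ is abelian we have $Q\le C_G(Q)$, and because $Q$ is a Sylow $p$-subgroup of $G$ it is also a Sylow $p$-subgroup of $N_G(Q)$; hence the full $p$-part of $N_G(Q)$ already sits inside $C_G(Q)$, which forces $|H|$ to be prime to $p$. Combining the two divisibilities, $|H|$ divides $p-1$.

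Now the smallest-prime hypothesis closes the argument: every prime factor of $p-1$ is strictly smaller than $p$, whereas $H$ is a section of $G$, so every prime dividing $|H|$ divides $|G|$ and is therefore $\ge p$. These are compatible only if $|H|=1$, i.e. $N_G(Q)=C_G(Q)$, and feeding this into Burnside's theorem yields the desired normal complement. I expect the main obstacle to be the transfer/fusion step underlying Burnside's theorem, namely checking that $V|_Q$ really is the $[G:Q]$-th power map; this rests on the Burnside fusion lemma (for the abelian Sylow subgroup $Q$, two elements of $Q$ conjugate in $G$ are already conjugate in $N_G(Q)$, hence equal once $N_G(Q)=C_G(Q)$), after which the transfer-evaluation formula collapses each orbit contribution to a power of $x$. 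Granting that standard lemma, the arithmetic in the two preceding paragraphs is routine.
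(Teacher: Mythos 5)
Your proof is correct. Note that the paper does not prove this statement at all: it is quoted as a known result (Theorem 15 of the cited reference by Ganev) and used as a black box. Your argument --- reducing to Burnside's normal $p$-complement theorem via the embedding of $N_G(Q)/C_G(Q)$ into $\Aut(Q)$, the coprimality forced by $Q$ being abelian Sylow, and the smallest-prime condition killing every prime factor of $p-1$ --- is exactly the standard proof of this classical fact, and is essentially the one given in the cited source, so there is nothing to repair.
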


\section{ Leinster groups of order $p^2qr$}

It is easy to verify that  $Q_{20}\times C_{19}$ and $Q_{28} \times C_{13}$ are Leinster groups. In this connection, a natural question arises: Is there any other Leinster group whose order is of the form $p^2qr$, $p<q<r$ being primes? We obtain the answer of this question and found that $Q_{20}\times C_{19}$ and $Q_{28} \times C_{13}$ are the only Leinster groups whose orders are of the form $p^2qr$.

We begin with the following elementary remark:

\begin{rem}\label{rem125}
If $G$ is a Leinster group, then the number of odd order normal subgroups of $G$ must be even. Hence, for any odd order Leinster group $G$,  $\tau(G)$ is even.
\end{rem}  

The following Lemmas will be used to get the main result of this section.

\begin{lem}\label{506}
If  $G$ is a Leinster group of  order $p^2qr$, where $p<q<r$ are primes, then $T_p \ntriangleleft G$. 
\end{lem}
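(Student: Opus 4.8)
The plan is to argue by contradiction: assume $T_p \triangleleft G$ and show the Leinster condition $\sigma(G)=2\mid G\mid$ must fail. First I would upgrade normality of $T_p$ to normality of $T_r$. Since $T_p\triangleleft G$, the quotient $G/T_p$ has order $qr$ with $q<r$, so its Sylow $r$-subgroup is normal; pulling back gives $N\triangleleft G$ with $\mid N\mid=p^2r$. In $N$ the number of Sylow $r$-subgroups divides $p^2$ and is $\equiv 1\pmod r$, and because $r>p$ the only way $p$ or $p^2$ could be $\equiv 1\pmod r$ is $r=p+1$, which is impossible here (it would force $p=2,r=3$, leaving no prime $q$ with $p<q<r$). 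Hence $T_r$ is the unique, and so characteristic, Sylow $r$-subgroup of $N$, giving $T_r\triangleleft G$. Therefore $H:=T_pT_r=T_p\times T_r$ is an abelian normal subgroup of prime index $q$, and by Schur--Zassenhaus $G=H\rtimes C_q$ with $C_q$ a Sylow $q$-subgroup; Theorem \ref{zumud1} then records the consistency relation $\mid G'\mid\mid Z(G)\mid=p^2r$.

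Writing $\alpha,\beta$ for the actions of $C_q$ on the characteristic subgroups $T_p,T_r$ of $H$, the decisive observation is that $\alpha$ \emph{cannot} be nontrivial unless $p=2$, $q=3$ and $T_p\cong C_2\times C_2$: if $T_p\cong C_{p^2}$ then $q\mid\mid\Aut(C_{p^2})\mid=p(p-1)$ is impossible for a prime $q>p$, while if $T_p\cong C_p\times C_p$ then $q\mid\mid\GL(2,p)\mid=p(p-1)^2(p+1)$ forces $q\mid p+1$, i.e.\ $q=p+1$, whence $p=2,q=3$. This splits the argument into the regime $\alpha$ trivial and the rigid regime $\alpha$ nontrivial.

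When $\alpha$ is trivial, $T_p$ is central and $G=T_p\times J$ with $J=T_r\rtimes C_q$ of order $qr$, so $\sigma(G)=\sigma(T_p)\sigma(J)$ by Theorem \ref{mul}. If $\beta$ is also trivial, $G$ is abelian, hence cyclic of order $p^2qr$ by Theorem \ref{alg}, forcing $p^2qr$ to be perfect; but an even perfect number has only two distinct prime divisors and an odd one has, by Euler's form, exactly one prime to an odd power, whereas $p^2qr$ has the two primes $q,r$ to odd powers, a contradiction. If $\beta$ is nontrivial, $J$ is the Frobenius group of order $qr$ with $\sigma(J)=1+r+qr$; substituting $\sigma(T_p)\in\{1+p+p^2,\,1+p+2p^2\}$ into $\sigma(T_p)(1+r+qr)=2p^2qr$ gives, for $T_p\cong C_p\times C_p$, a left side strictly exceeding $2p^2qr$, and for $T_p\cong C_{p^2}$ the identity $(p^2+p+1)(r+1)=qr(p^2-p-1)$, which forces $r\mid p^2+p+1$ and, by a crude size estimate, $p\in\{2,3\}$, each checked directly to admit no prime solution.

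When $\alpha$ is nontrivial we are reduced to $p=2,q=3,T_p\cong C_2\times C_2$ and $\mid G\mid=12r$. If $\beta$ is trivial then $G\cong A_4\times C_r$ and $\sigma(G)=17(1+r)\neq 24r$. If $\beta$ is nontrivial then $G=(C_2\times C_2\times C_r)\rtimes C_3$ with $C_3$ acting irreducibly on $C_2\times C_2$; enumerating the $C_3$-invariant subgroups of $H$ and checking that neither the order-$12$ nor the order-$3r$ complement is normal, the normal subgroups are exactly $1,T_p,T_r,H,G$, so $\sigma(G)=5+17r\neq 24r$. I expect the main obstacle to be precisely these two final cases, where $G$ is \emph{not} a coprime direct product, so Theorem \ref{mul} does not apply and one must instead determine the normal-subgroup lattice by hand (via irreducibility of the $C_3$-action, respectively the Frobenius structure) before ruling out the resulting arithmetic identities; the rest of the proof is structural reduction and routine checking.
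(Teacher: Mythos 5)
Your proof is correct, and it takes a genuinely different route from the paper's. The paper, after noting that $T_p\lhd G$ yields an abelian normal subgroup of index $q$, leans on Theorem \ref{zumud1} to get $\mid G'\mid\mid Z(G)\mid=p^2r$ and then splits into the cases $q>3$ and $q=3$, running through the possible values of $\mid G'\mid$ (in the $q=3$ branch this leads it to list $A_4\times C_r$, $Q_{12}\times C_r$ and $D_{12}\times C_r$ as candidates, the latter two of which do not in fact have a normal Sylow $2$-subgroup and are extraneous under the standing hypothesis). You instead first upgrade $T_p\lhd G$ to $T_r\lhd G$ by Sylow counting in the preimage of the Sylow $r$-subgroup of $G/T_p$, write $G=(T_p\times T_r)\rtimes C_q$, and organize the case analysis around the triviality or not of the two component actions $\alpha,\beta$; the coprimality constraints on $\mid\Aut(T_p)\mid$ pin the nontrivial-$\alpha$ regime down to $p=2$, $q=3$, $T_p\cong C_2\times C_2$. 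This reaches exactly the four structures that matter ($C_{p^2}\times C_q\times C_r$, $T_p\times(C_r\rtimes C_q)$, $A_4\times C_r$, $(C_2^2\times C_r)\rtimes C_3$) and disposes of each by computing $\sigma$ directly. Your version is more self-contained: it makes explicit the non-abelianness via the prime-factor shape of perfect numbers (which the paper asserts tersely from Theorem \ref{alg}), it does not really need Theorem \ref{zumud1}, and the hand computation of the normal-subgroup lattice of $(C_2^2\times C_r)\rtimes C_3$ via irreducibility of the $C_3$-action on $C_2^2$ is a clean replacement for the paper's appeal to a classification of groups with a normal subgroup of order $12$. Two minor remarks: Schur--Zassenhaus is overkill (a Sylow $q$-subgroup already complements $H$), and for completeness the crude size estimate in the $(p^2+p+1)(r+1)=qr(p^2-p-1)$ step should be recorded explicitly, e.g.\ $q(p^2-p-1)\le 2(p^2+p+1)$ forces $p\le 3$; but both are routine.
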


\begin{proof}
In view of Theorem \ref{alg}, $G$ is non-abelian. Now, if $T_p \lhd G$, then by Correspondence theorem, $G$ has an abelian normal centralizer say $N$ of index $q$ and consequently, by Theorem \ref {zumud1}, we have $p^2qr=q \mid G' \mid \mid Z(G) \mid$. 

Now, if $q>3$, then $G$ has  an abelian centralizer say $K$ of index $r$. Clearly, $N \cap K =Z(G)$ and $G=NK$. Therefore $\mid Z(G) \mid =p^2$ and hence  $\mid G' \mid =r$. But then, $G$ has a normal subgroup of order $qr$, which implies $G \cong T_p \times (C_r \rtimes C_q)$. In the present scenario, if $T_p=C_{p^2}$, then by Theorem \ref{mul},  $\tau(G)=9$ and consequently, by Remark \ref{rem125}, $\mid G \mid =4qr$. Now, using Theorem \ref{mul} again, we have $8qr=\sigma(G)=\sigma(T_2) \times \sigma (C_r \rtimes C_q)=7(1+r+qr)$, which is impossible. Again, if $T_p= C_p \times C_p$, then $\sigma (G)> 2 \mid G \mid$. 

Next, suppose $q=3$. Then we have $\mid G' \mid \mid Z(G) \mid=4r$. Now, if $\mid G' \mid=4$, then $G$ has a normal subgroup of order $12$ and consequently, $G \cong A_4 \times C_r, Q_{12} \times C_r$ or $D_{12} \times C_r$, which are not Leinster. Again, if $\mid G' \mid=4r$, then $\sigma(G)  < 2 \mid G \mid$. Finally, if $\mid G' \mid=2, r$ or $2r$, then by Correspondence theorem, $G$ has a normal subgroup of index $2$, which is a contradiction. Hence $T_p \ntriangleleft G$. 
\end{proof}

\begin{lem}\label{6}
If $G$ is a Leinster group of  order $p^2qr$, $p<q<r$ being  primes,  then  $\mid G' \mid \neq r$.
\end{lem}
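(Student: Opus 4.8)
Suppose, for contradiction, that $G$ is a Leinster group of order $p^2qr$ with $|G'|=r$. By Theorem \ref{alg} the group $G$ is non-abelian, and $G'=T_r\cong C_r$ is the normal Sylow $r$-subgroup. Since $G/G'$ is an abelian quotient of a group satisfying $\sigma(G)=2|G|$, Theorem \ref{aqt} forces $G/G'\cong C_{p^2q}$ to be cyclic, and as $\gcd(r,p^2q)=1$ the Schur--Zassenhaus theorem gives a splitting $G\cong C_r\rtimes H$ with $H\cong C_{p^2q}$ cyclic, the conjugation action being encoded by a homomorphism $\phi\colon H\to\Aut(C_r)\cong C_{r-1}$. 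The hypothesis $G'=C_r$ is equivalent to $\phi$ being non-trivial. I would write $d=|\phi(H)|$, so that $1<d\mid\gcd(p^2q,r-1)$, and let $K=\ker\phi$, a cyclic group of order $M:=p^2q/d$.

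The heart of the argument is to compute $\sigma(G)$ exactly from this structure by classifying the normal subgroups $N$ of $G$. For any such $N$, the intersection $N\cap C_r$ is a $G$-invariant subgroup of $C_r$, hence equals $1$ or $C_r$. The normal subgroups containing $C_r$ are exactly the preimages of the six subgroups of the cyclic group $G/C_r\cong C_{p^2q}$, contributing subgroups of order $rd'$ for each divisor $d'$ of $p^2q$. If instead $N\cap C_r=1$, then $[N,C_r]\le N\cap C_r=1$, so $N\le C_G(C_r)=C_r\times K$, a cyclic group of order $rM$; its subgroups meeting $C_r$ trivially are precisely the subgroups of $K$, each of which is normal in $G$ (being centralized by $C_r$ and normal in the abelian $H$). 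Writing $\sigma(C_m)$ for the sum of the orders of the subgroups of the cyclic group $C_m$ (equivalently, the sum of the divisors of $m$, since every subgroup of a cyclic group is normal), this bookkeeping yields
\[
\sigma(G)=r\,\sigma(C_{p^2q})+\sigma(C_M)=r(1+p+p^2)(1+q)+\sigma(C_M).
\]

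Imposing $\sigma(G)=2p^2qr$ then collapses the whole problem to the single equation
\[
\sigma(C_M)=rB,\qquad B:=p^2q-p^2-pq-p-q-1=p^2(q-1)-(1+p)(1+q),
\]
and the remaining work---which I expect to be the main obstacle, though it is elementary---is to show this has no admissible solution. Since $d\ge p$ one has $M\le pq$, hence $\sigma(C_M)\le\sigma(C_{pq})=(1+p)(1+q)$. If $B\le 0$ the right-hand side is non-positive while $\sigma(C_M)\ge 1$, which is absurd; this kills the small cases (for instance $p=2$ with $q\le 7$). If $B\ge 1$, then $rB\le(1+p)(1+q)$ together with $r>q$ gives $qB<(1+p)(1+q)$, which for $p\ge 3$ rearranges to $\dfrac{p^2}{p+1}<\dfrac{(q+1)^2}{q(q-1)}$; this is false, since the left side is at least $\tfrac94$ while the right side is at most $\tfrac95$ for $q\ge 5$. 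The one surviving case $p=2$ (where $B=q-7$, forcing $q\ge 11$) I would close with the sharper estimate $\sigma(C_M)\le\sigma(C_{2q})=3(q+1)$, whence $r(q-7)\le 3(q+1)$, incompatible with $r>q\ge 11$. I anticipate that the only genuinely delicate step is the exact enumeration of normal subgroups behind the displayed formula for $\sigma(G)$; everything after it is a short, careful arithmetic estimate.
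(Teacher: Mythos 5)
Your proof is correct, and it takes a genuinely different route from the paper's. The paper argues combinatorially: it invokes Lemma \ref{506} ($T_p \ntriangleleft G$) and Theorem \ref{sjb} ($\tau(G)>7$) to constrain which orders of normal subgroups can occur, splits into cases according to whether a normal subgroup of order $pq$ exists, and dismisses each resulting instance of the Leinster equation with a (largely unexplicated) arithmetic contradiction. You instead pin down the exact isomorphism type: Theorem \ref{aqt} makes $G/G'$ cyclic, Schur--Zassenhaus gives $G\cong C_r\rtimes C_{p^2q}$, and your enumeration of normal subgroups (the six preimages of subgroups of the cyclic quotient, plus the subgroups of $\ker\phi$) is complete and correct, yielding the closed formula $\sigma(G)=r(1+p+p^2)(1+q)+\sigma(C_M)$; I checked this against a concrete example and it holds. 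The reduction to $\sigma(C_M)=rB$ and the three-way arithmetic analysis ($B\le 0$; $B\ge 1$ with $p\ge 3$; $p=2$ with $q\ge 11$) are all sound. What your approach buys is self-containment --- you do not need Lemma \ref{506}, Theorem \ref{sjb}, or the case split on a normal subgroup of order $pq$ --- together with an explicit formula for $\sigma(G)$ and a fully carried-out arithmetic endgame where the paper merely asserts ``which is a contradiction.'' What the paper's approach buys is uniformity: the same template (uniqueness of normal subgroups via the Correspondence theorem, then the Leinster equation) is reused across Lemmas \ref{6}--\ref{62}, whereas your structural argument exploits the special feature that $|G'|$ is prime and coprime to its index, so it would not transfer as directly to, say, the case $|G'|=pq$.
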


\begin{proof}
If $\mid G' \mid = r$, then in view of Theorem \ref{aqt}, Lemma \ref{506} and Correspondence theorem, $G$ has unique normal subgroup for each of the orders $r, pr, qr, p^2r, pqr$ and $G$ cannot have any normal subgroup of order $p^2, p^2q$. Note that $G$ can have at the most one normal subgroup of order $pq$. Suppose $G$ has a normal subgroup $N$ of order $pq$. Let $K$ be the normal subgroup of order $pr$. Now, if $\mid N \cap K \mid=1$, then exactly one of $N$ or $K$ is cyclic, noting that  $\mid G' \mid = r$. Consequently, $\tau(G)=10$. Again,  if $\mid N \cap K \mid=p$, then also we have $\tau(G)=10$. Therefore from the definition of Leinster groups, we have $p^2qr=1+p+q+pq+r+pr+qr+p^2r+pqr$. But then $(p-1)pqr=(1+p)(1+q)+(1+p+q+p^2)r$, which is a contradiction. Therefore $G$ cannot have a normal subgroup of order $pq$. Consequently, in view of Theorem \ref{sjb}, using the definition of Leinster groups, we have $p^2qr=1+p+r+pr+qr+p^2r+pqr$ or $1+q+r+pr+qr+p^2r+pqr$, which is again a contradiction. Hence  $\mid G' \mid \neq r$.
\end{proof}

\begin{lem}\label{60}
If $G$ is  a Leinster group of  order $p^2qr$, $p<q<r$ being  primes, then  $\mid G' \mid \neq qr$.
\end{lem}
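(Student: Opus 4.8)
The plan is to assume $\mid G' \mid = qr$ and derive a contradiction by first pinning down the structure of $G$ and then counting its normal subgroups. Since $G/G'$ is an abelian quotient of a Leinster group, Theorem \ref{aqt} forces it to be cyclic, so $G/G' \cong C_{p^2}$. The Sylow $r$-subgroup of $G'$ (its order is $qr$ with $q<r$, so it is normal, hence characteristic, in $G'$) is then normal in $G$; call it $C_r$. Passing to $\bar G = G/C_r$ of order $p^2q$, one checks that $\bar G' = G'/C_r$ has order $q$ and $\bar G/\bar G' \cong C_{p^2}$, so a Sylow $p$-subgroup of $\bar G$ is isomorphic to $C_{p^2}$; since $T_p \cap C_r = 1$, lifting back gives $T_p \cong C_{p^2}$, i.e. the Sylow $p$-subgroup of $G$ is cyclic.

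As $p$ is the smallest prime divisor of $\mid G \mid$, Theorem \ref{sqfree} now yields a normal complement to $T_p$, a normal subgroup of order $qr$; because $G/G' \cong C_{p^2}$ has order prime to $qr$, every normal subgroup of order $qr$ lies in $G'$ and hence equals $G'$. Thus $G \cong G' \rtimes C_{p^2}$ with $T_p = C_{p^2}$ acting on $G'$. I would then split according to whether $G'$ is cyclic ($C_{qr}$) or non-abelian ($C_r \rtimes C_q$), recording the orders $a, b \in \{1,p,p^2\}$ of the images of $C_{p^2}$ acting on the $C_q$- and $C_r$-parts; the requirement $\mid G' \mid = qr$ together with $\bar G' = C_q$ constrains these (e.g. $a \geq p$ always, and in the cyclic case also $b \geq p$).

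Next I would enumerate the normal subgroups of $G$ in each configuration by intersecting with $G'$ and projecting to $C_{p^2}$, using that a normal subgroup meeting $G'$ trivially or in $C_r$ or $C_q$ must centralize the corresponding quotient. The key point is that in every configuration except one the count gives $\tau(G) \leq 7$; since $\mid G \mid = p^2qr$ is a product of four primes and $G \neq C_7 \rtimes C_8$, Theorem \ref{sjb} rules these out at once. The only surviving configuration is $G' \cong C_{qr}$ with both actions of order exactly $p$, so that $G$ has normal subgroups of orders $1, p, q, r, pq, pr, qr, pqr, p^2qr$ and $\tau(G)=9$.

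In that final case a direct computation gives $\sigma(G) = (1+p)(1+q)(1+r) + p^2qr$, so the Leinster condition $\sigma(G) = 2\mid G \mid$ becomes $(1+p)(1+q)(1+r) = p^2qr$. I would finish by showing this has no solutions in primes $p<q<r$: for $p=2$ it reduces to $(q-3)(r-3)=12$, which admits no pair of odd primes with $q<r$, while for $p \geq 3$ dividing through by $pqr$ gives $(1+\tfrac1p)(1+\tfrac1q)(1+\tfrac1r) = p$, impossible since the left-hand side is at most $(1+\tfrac13)(1+\tfrac15)(1+\tfrac17) < 2 < p$. I expect the main obstacle to be the normal-subgroup enumeration: making the case analysis genuinely exhaustive (determining, for each pair $(a,b)$ and each shape of $G'$, exactly which small normal subgroups occur, and that $\tau$ never lands strictly between $7$ and $9$) is where the bookkeeping is delicate, whereas once the structure is fixed the counting and the final Diophantine step are routine.
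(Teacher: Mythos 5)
Your argument is correct but follows a genuinely different route from the paper's. You first pin down the structure of $G$: cyclicity of $G/G'$ (Theorem \ref{aqt}) together with the quotient $G/C_r$ gives $T_p\cong C_{p^2}$, then Theorem \ref{sqfree} yields $G=G'\rtimes C_{p^2}$, and you classify the possible actions; every configuration except $G'\cong C_{qr}$ with both actions of order exactly $p$ has $\tau(G)\le 7$ (these counts do check out: the other cyclic configurations give $\tau(G)=7,7,6$ and the non-abelian $G'$ configurations give at most $1,p,r,pr,qr,pqr,p^2qr$), and the surviving one leads to $(1+p)(1+q)(1+r)=p^2qr$, which you correctly show is insoluble. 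The paper never establishes this decomposition: it argues purely by counting normal subgroups via Lemma \ref{506} and the Correspondence theorem, showing $G$ must have normal subgroups $N$, $K$ of orders $pq$, $pr$ and splitting on $\mid N\cap K\mid$; the case $\mid N\cap K\mid=p$ yields exactly your equation $(1+p)(1+q)(1+r)=p^2qr$ (asserted impossible there without detail --- your $(q-3)(r-3)=12$ computation supplies the missing verification), while the case $\mid N\cap K\mid=1$ forces $G\cong(C_q\rtimes C_p)\times(C_r\rtimes C_p)$ and a separate equation. Your structural approach disposes of that second case for free, since such a $G$ has $G/G'\cong C_p\times C_p$ non-cyclic, contradicting Theorem \ref{aqt}; indeed your observation that $T_p$ is cyclic already excludes it. What the paper's route buys is brevity (no classification of actions is needed); what yours buys is transparency and a fully justified endgame, at the cost of the normal-subgroup bookkeeping that you rightly flag as the delicate step and would need to write out in full.
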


\begin{proof}
If $\mid G' \mid = qr$, then in view of Theorem \ref{aqt}, Lemma \ref{506} and Correspondence theorem, $G$ has unique normal subgroup for each of the orders $qr, pqr$ and  $G$ cannot have normal subgroup of order $p^2, p^2q, p^2r$. Note that $G$ can have at the most one normal subgroup for each of the orders $pq, pr$. If $G$ has no normal subgroup of order $pq$ or $pr$, then $\tau(G) \leq 7$, which is a contradiction to Theorem \ref{sjb}. Therefore $G$ must have a normal subgroup $N$ of order $pq$ and a normal subgroup $K$ of order $pr$. Now, if $\mid N \cap K  \mid=1$, then $G=(C_q \rtimes C_p) \times (C_r \rtimes C_p)$. Therefore using the definition of Leinster groups, we have  $p^2qr=1+q+r+pq+pr+qr+pqr$, which implies $r=\frac{1+(p+1)q}{q(p^2-p-1)-(p+1)}$. In the present situation, one can easily verify that $p>7$, which is a contradiction. Therefore $\mid N \cap K  \mid=p$ and consequently, from the definition of Leinster groups, we have 
$p^2qr=1+p+q+r+pq+qr+pr+pqr=(1+p)(1+q)(1+r)$, which is again impossible.
Hence  $\mid G' \mid \neq qr$.
\end{proof}

\begin{lem}\label{61}
If $G$ is  a Leinster group of  order $p^2qr$, $p<q<r$ being  primes, then  $\mid G' \mid \neq pq$.
\end{lem}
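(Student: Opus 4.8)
The plan is to assume $\mid G' \mid = pq$ and derive a contradiction by showing that $G$ is forced to have at most seven normal subgroups, contrary to Theorem \ref{sjb}. Since $G/G'$ is an abelian quotient of order $pr$, Theorem \ref{aqt} forces $G/G' \cong C_{pr}$, and by the Correspondence theorem this cyclic quotient has exactly one subgroup of each order dividing $pr$. Hence $G$ has a unique normal subgroup of each of the orders $pq, p^2q, pqr$, together with $p^2qr$ itself. I would also record at the outset that the Sylow $q$-subgroup of $G'$ is characteristic in $G'$ (being its unique Sylow $q$-subgroup, as $p<q$ gives $n_q(G')=1$) and hence is a normal subgroup $Q_0$ of $G$ of order $q$; moreover $n_q(G)=1$, so this is the \emph{unique} normal subgroup of order $q$. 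This already produces six guaranteed normal subgroups, of orders $1, q, pq, p^2q, pqr, p^2qr$.

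Next I would eliminate the remaining divisors of $\mid G \mid$ as possible orders. Lemma \ref{506} rules out a normal subgroup of order $p^2$, and since the product of two distinct normal subgroups of order $p$ would be a normal subgroup of order $p^2$, there is at most one normal subgroup of order $p$. The orders $qr$ and $p^2r$ are excluded by an abelian-quotient argument: a normal subgroup of order $qr$ (respectively $p^2r$) would yield a quotient of order $p^2$ (respectively $q$), which is abelian, forcing $G' $ to lie inside it; but $pq \nmid qr$ and $pq \nmid p^2r$, a contradiction.

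The crucial step, which I expect to be the main obstacle, is the exclusion of normal subgroups of orders $r$ and $pr$. First, a normal subgroup of order $r$ together with the normal subgroup $Q_0$ of order $q$ would generate a normal subgroup of order $qr$, which has just been excluded; hence $G$ has no normal subgroup of order $r$. Then, if some $H \lhd G$ had order $pr$, its Sylow $r$-subgroup would be unique (since $p<r$ gives $n_r(H)=1$), hence characteristic in $H$ and therefore normal in $G$ of order $r$ --- again impossible. Thus $G$ has no normal subgroup of order $pr$ either. The delicate point here is recognizing that the apparently admissible order $pr$ is self-defeating: it smuggles in a normal subgroup of order $r$ through the characteristic Sylow tower of $H$.

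Collecting these facts, the only orders a normal subgroup of $G$ can take are $1, p, q, pq, p^2q, pqr, p^2qr$, with the order-$p$ subgroup occurring at most once and each of the others exactly once, so $\tau(G) \leq 7$. Since $\mid G \mid = p^2qr$ is a product of four (not necessarily distinct) primes and $G \not\cong C_7 \rtimes C_8$, Theorem \ref{sjb} gives $\tau(G) > 7$, a contradiction. Hence $\mid G' \mid \neq pq$.
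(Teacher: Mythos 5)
Your argument diverges from the paper's in a pleasant way at the $pr$ step --- the paper assumes a normal subgroup $N$ of order $pr$ exists, deduces $\mid G'\cap N\mid=p$, that $G'$ is cyclic, that $\mid Z(G)\mid =r$ via Theorem \ref{zumud1}, and then kills the resulting Diophantine equation, whereas you observe that the Sylow $r$-subgroup of such an $N$ is characteristic in $N$, hence normal in $G$, and collides with $Q_0$ to produce the forbidden order $qr$. That shortcut is correct and genuinely cleaner. However, there is a real gap earlier: the uniqueness of the normal subgroup of order $pq$. The Correspondence theorem applied to $G/G'\cong C_{pr}$ only controls normal subgroups \emph{containing} $G'$; for orders $p^2q$ and $pqr$ this suffices because those subgroups have prime index ($r$ and $p$), so their quotients are abelian and they must contain $G'$. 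But a normal subgroup $M$ of order $pq$ has index $pr$, and a group of order $pr$ need not be abelian, so $M$ need not contain $G'$, and your correspondence argument says nothing about it. This matters: if a second normal subgroup of order $pq$ existed, your count would only give $\tau(G)\leq 8$, Theorem \ref{sjb} would force $\tau(G)=8$, and the Leinster equation $1+p+q+2pq+p^2q+pqr+p^2qr=2p^2qr$ is actually solvable (take $p=2$, $q=3$, $r=5$, giving $\sigma=120=2\cdot 60$), so no numerical contradiction arrives and the proof stalls.

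The gap is fixable. Any normal $M$ of order $pq$ contains $Q_0$ (its Sylow $q$-subgroup is unique, hence equals the normal Sylow $q$-subgroup $Q_0$ of $G$), so $M/Q_0$ is a normal subgroup of order $p$ in $G/Q_0$. If $M\neq G'$, then $(G/Q_0)/(M/Q_0)$ is a group of order $pr$ whose derived subgroup is the image of $G'/Q_0$, which has order $p$; but a group of order $pr$ with $p<r$ has derived subgroup of order $1$ or $r$, a contradiction. (Alternatively one can argue through the unique normal subgroup $K$ of order $pqr$, which is essentially the paper's hint ``since $K$ is unique''.) With that uniqueness in hand, your count $\tau(G)\leq 7$ and the appeal to Theorem \ref{sjb} complete the proof; note that $G\not\cong C_7\rtimes C_8$ is automatic since $56$ is not of the form $p^2qr$ with three distinct primes.
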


\begin{proof}
If $\mid G' \mid = pq$, then in view of Theorem \ref{aqt} and Correspondence theorem, $G$ has unique normal subgroups, say, $H$ and $K$ of order $p^2q$ and $pqr$ respectively. Moreover, $G$ cannot have normal subgroups of order $qr$ and $p^2r$. Note that $G$ has unique normal subgroup of order $pq$ since $K$ is unique and can have at the most one normal subgroup of order $pr$. Now, suppose $G$ has a normal subgroup $N$ of order $pr$. Then $ \mid G' \cap N \mid=p$, otherwise  $G=G' \times N$, which is a contradiction since $\mid G' \mid=pq$.  Therefore $G$ has a normal subgroup of order $p$. Consequently, $G'$ is cyclic and hence $G$ has a cyclic normal subgroup of index $p$. It now follows from Theorem \ref{zumud1}, that $ \mid Z(G) \mid=r$ and consequently, $T_q \ntriangleleft G$. In the present scenario, in view of Lemma \ref{506}, using the definition of Leinster groups, we have  $p^2qr=1+p+r+pq+pr+p^2q+pqr$ and hence $r=\frac{p^2q+pq+p+1}{p^2-pq-p-1}$. In this case, one can verify that $p>7$, which is impossible. Therefore $G$ cannot have a normal subgroup of order $pr$. But then $\tau (G)\leq 7$, which is again impossible by Theorem \ref{sjb}. Hence $\mid G' \mid \neq pq$.
\end{proof}

\begin{lem}\label{62}
If $G$ is a Leinster group of  order $p^2qr$, $p<q<r$ being  primes, then  $\mid G' \mid \neq pr$.
\end{lem}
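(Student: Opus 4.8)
The plan is to suppose $\mid G' \mid = pr$ and force a contradiction by determining every normal subgroup of $G$ and then reading off the Leinster equation $\sigma(G) = 2\mid G\mid$. By Theorem \ref{aqt} the quotient $G/G'$ is cyclic of order $pq$, so the Correspondence theorem yields a unique normal subgroup of each of the orders $pr$, $p^2r$ and $pqr$ (these being the subgroups containing $G'$). Moreover $G'$ is normal of order $pr$ with $p<r$, so its Sylow $r$-subgroup is the unique one, hence characteristic in $G'$ and therefore normal in $G$; this produces a normal subgroup of order $r$. Thus $G$ automatically possesses the six normal subgroups of orders $1,\, r,\, pr,\, p^2r,\, pqr,\, p^2qr$.

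Next I would eliminate every other order. Lemma \ref{506} forbids order $p^2$. If $N\lhd G$ had order $qr$, then $G/N$, being of order $p^2$, would be abelian, forcing $G'\subseteq N$; this is impossible since $pr\nmid qr$. The same argument applied to the prime-order quotient $G/N$ rules out order $p^2q$, as $pr\nmid p^2q$. The only orders left are $p$, $q$ and $pq$, and each occurs at most once: two distinct normal subgroups of order $p$ would generate a normal subgroup of order $p^2$; a normal subgroup of order $q$ is a Sylow subgroup, hence unique; and two distinct ones of order $pq$ would generate a subgroup of order $p^2q$, $pq^2$ or $p^2q^2$, none of which is an admissible order of a normal subgroup here.

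Writing $a,b,c\in\{0,1\}$ for the numbers of normal subgroups of orders $p$, $q$, $pq$ respectively (with $a+b+c\geq 2$ by Theorem \ref{sjb}), the identity $\sigma(G)=2p^2qr$ then collapses to the single relation
\[
r\,(p^2q-pq-p^2-p-1)=1+ap+bq+c\,pq\leq (1+p)(1+q).
\]
I expect the genuine obstacle to lie in this closing arithmetic rather than in the structure theory, since the crude bound does not immediately defeat all small cases. One checks that the relation admits no prime solution $p<q<r$: for $p=2,\,q=3$ the bracket on the left is negative while the right-hand side is positive; for $p\geq 3$, and for $p=2$ with $q\geq 7$, the displayed inequality forces $r<q$, contradicting $r>q$; and the lone borderline case $p=2,\,q=5$ gives $3r\leq 18$, so $r\leq 6$, which leaves no prime $r$ exceeding $q=5$. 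Since every case yields a contradiction, we conclude that $\mid G'\mid\neq pr$.
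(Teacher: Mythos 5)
Your proof is correct, and it takes a genuinely different route from the paper's. The key move you make that the paper does not is to observe that $G'$, having order $pr$ with $p<r$, has a characteristic subgroup of order $r$, so $T_r\lhd G$ automatically; this pins down six normal subgroups outright, reduces everything else to the three indicators $a,b,c$, and closes with the single inequality $r(p^2q-pq-p^2-p-1)\leq(1+p)(1+q)$, whose case analysis you carry out correctly. The paper instead uses Theorem \ref{sjb} to force a normal subgroup $N$ of order $pq$, extracts a normal subgroup of order $p$ from $G'\cap N$, splits on whether $T_r\lhd G$ (killing that branch via Theorem \ref{zumud1} and the center), and lands on the different identity $p^2qr=1+p+q+pq+pr+p^2r+pqr$. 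Your version is more uniform, avoids the center/cyclic-index-$p$ machinery entirely, and in passing shows that the paper's ``$T_r\ntriangleleft G$'' branch is vacuous.

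One step deserves more justification than your parenthetical gives it: the uniqueness of the normal subgroup of order $pr$. The Correspondence theorem only controls normal subgroups \emph{containing} $G'$; a normal subgroup $M$ of order $pr$ has index $pq$, and a quotient of order $pq$ need not be abelian, so $G'\subseteq M$ is not automatic (for the orders $p^2r$ and $pqr$ the quotient has prime order and the argument is fine). This is not a throwaway detail: if a second order-$pr$ normal subgroup were permitted, your equation at $p=2$, $q=5$ would become $r=1+2a+5b+10c$ and would admit the primes $r=11$ and $r=13$. The gap closes easily: if $M\neq G'$ is normal of order $pr$, then $\mid G'M\mid=p^2r^2/\mid G'\cap M\mid$ must divide $p^2qr$, forcing $\mid G'\cap M\mid=r$ and hence $\mid G'M/M\mid=p$; but then $G/M$ is a nonabelian group of order $pq$ whose commutator subgroup $G'M/M$ has order $q\neq p$, a contradiction. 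With that patch the argument is complete.
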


\begin{proof}
If $\mid G' \mid = pr$, then in view of Theorem \ref{aqt} and Correspondence theorem, $G$ has unique normal subgroup for each of the orders $p^2r$ and $pqr$. Moreover, $G$ cannot have normal subgroups of order $qr$ and $p^2q$. Let $K$ be the normal subgroup of order $pqr$. It is easy to see that $G$ can have at the most one normal subgroup of order $pq$ and $pr$, noting that $K$ is unique. 

Now, if $N$ is a normal subgroup of $G$ of order $pq$, then   $\mid G' \cap N \mid=p$, otherwise  $G=G' \times N$, which is a contradiction since $\mid G' \mid=pr$. It now follows that $G$ has a normal subgroup of order $p$ and hence $N$ is cyclic.  Now, if $T_r \lhd G$, then $K$ is a cyclic normal subgroup of $G$ of index $p$. Therefore by Theorem \ref{zumud1}, we have  $\mid Z(G) \mid=q$, which is impossible. Consequently, in view of Theorem \ref{sjb} and  Lemma \ref{506}, using the definition of Leinster groups, we have  $p^2qr=1+p+q+pq+pr+p^2r+pqr=(1+p)(1+q)+r(p+p^2+pq)$, which is impossible. Therefore $G$ cannot have a normal subgroup of order $pq$. But then, again we have  $\tau (G)\leq 7$, which is a contradiction to Theorem \ref{sjb}. Hence $\mid G' \mid \neq pr$.
\end{proof}

Now, we are ready to state the following theorem:

\begin{thm}\label{56}
If  $G$ is a Leinster group of  order $p^2qr$, where $p<q<r$ are primes, then $G \cong Q_{20}\times C_{19}$ or $Q_{28} \times C_{13}$.
\end{thm}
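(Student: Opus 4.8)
The plan is to determine $G'$ and then the isomorphism type of $G$. Since the only non-solvable group of order $p^2qr$ is $A_5$ (occurring for $(p,q,r)=(2,3,5)$), and $A_5$ has $\sigma(A_5)=61\neq 120$, we may assume $G$ is solvable, so $G'\neq G$; also $G$ is non-abelian by Theorem~\ref{alg}, so $G'\neq 1$, and $G/G'$ is cyclic by Theorem~\ref{aqt}. Running through the proper nontrivial divisors of $p^2qr$ and deleting $r,\,qr,\,pq,\,pr$ by Lemmas~\ref{6},~\ref{60},~\ref{61} and~\ref{62}, the surviving possibilities are $|G'|\in\{p,\,p^2,\,q,\,p^2q,\,p^2r,\,pqr\}$. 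Two collapse at once: if $|G'|=p^2$ then $G'=T_p$ is a normal Sylow $p$-subgroup, against Lemma~\ref{506}; and if $|G'|=p$, conjugation gives $G\to\Aut(G')\cong C_{p-1}$ whose image is a common divisor of $p-1$ and $p^2qr$, hence trivial (all prime factors of $p-1$ are $<p$). Then $G'\leq Z(G)$, so $G/Z(G)$ is cyclic and $G$ abelian, a contradiction.

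I would eliminate the three large values through a quotient. If $|G'|=p^2q$, then $G'$ has a normal Sylow subgroup; a normal Sylow $p$-subgroup would be characteristic and give $T_p\lhd G$ against Lemma~\ref{506}, so instead $T_q\lhd G$. Now $G/T_q$ has order $p^2r$ and $(G/T_q)'=G'/T_q$ of order $p^2$, so its Sylow $p$-subgroup $P$ is normal and equals the derived subgroup; but $C_r$ must act trivially on $P$, since $r>q>p$ forces $r\nmid|\Aut(P)|$ for both $P\cong C_{p^2}$ and $P\cong C_p\times C_p$, whence $(G/T_q)'=1$, a contradiction. The value $|G'|=p^2r$ is symmetric, giving $T_r\lhd G$ and the same contradiction, except for $p=2,\,q=3$, where $C_3$ can act irreducibly on $C_2\times C_2$; there $G/T_r\cong A_4$, a direct listing gives $\tau(G)\le 7$, and Theorem~\ref{sjb} excludes it. Finally, for $|G'|=pqr$ we have $T_q,T_r\leq G'$; if $G'$ is abelian it is cyclic and Theorem~\ref{zumud1} forces $|Z(G)|=1$, impossible since the complementary $p$-element centralizes the $C_p$ inside $G'$, while if $G'$ is non-abelian one passes to $G''\lhd G$ and removes the remaining shapes by the counting-plus-Leinster-equation method of Lemmas~\ref{60}--\ref{62}.

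This leaves the decisive case $|G'|=q$. Here $G/G'$ is cyclic of order $p^2r$, so its $p$-part is cyclic and $T_p\cong C_{p^2}$. As $p$ is the least prime divisor and $T_p$ is cyclic, Theorem~\ref{sqfree} yields a normal complement $N$ of order $qr$; since $q<r$, its Sylow $r$-subgroup is characteristic, so $T_r\lhd G$. Because $N'\leq G'$ has order dividing $q$ whereas a non-abelian group of order $qr$ has commutator subgroup $C_r$, the complement $N$ is abelian, i.e. $N=C_{qr}$; then $G'=[N,T_p]$ has order $q$, forcing $T_p$ to centralize the $C_r$ in $N$. Hence $T_r$ is a central direct factor and
\[
G\cong (C_q\rtimes C_{p^2})\times C_r,
\]
where $C_{p^2}$ acts on $C_q$ through a subgroup of $\Aut(C_q)\cong C_{q-1}$ of order $p$ or $p^2$. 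If the image has order $p$, the kernel is a central $C_p$, the non-abelian factor has normal subgroups precisely of orders $1,p,q,pq,p^2q$, and Theorem~\ref{mul} gives $\tau(G)=10$ with $\sigma(G)=(1+p+q+pq+p^2q)(1+r)$; if the action is faithful there is no central $C_p$, $\tau(G)=8$, and $\sigma(G)=(1+q+pq+p^2q)(1+r)$.

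Imposing $\sigma(G)=2p^2qr$ turns each case into a Diophantine equation. The first gives
\[
r=\frac{1+p+q+pq+p^2q}{\,q(p^2-p-1)-(p+1)\,},
\]
and requiring $r$ to be a prime larger than $q$ forces $p=2$ (for $p\geq 3$ the right-hand side is eventually smaller than $q$, leaving only finitely many small pairs to check by hand); with $p=2$ it becomes $r=7+\frac{24}{q-3}$, so $q-3\mid 24$ and the prime solutions with $r>q$ are exactly $(p,q,r)=(2,5,19)$ and $(2,7,13)$. The corresponding groups $(C_q\rtimes C_4)\times C_r$ with $C_4$ acting by inversion are $Q_{20}\times C_{19}$ and $Q_{28}\times C_{13}$. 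In the faithful sub-case one gets $r=7+\frac{8}{q-1}$ for $p=2$ (needing $4\mid q-1$), whose only candidate $q=5$ yields the composite $r=9$, so it contributes nothing. I expect the real difficulty to lie not in this final arithmetic but in the structural steps: disposing of $|G'|\in\{p^2q,p^2r,pqr\}$ cleanly, and, in the surviving case, distinguishing the dicyclic group $C_q\rtimes C_{p^2}$ (with its central $C_p$ and $\tau(G)=10$) from the metacyclic Frobenius group of the same order and commutator subgroup, as only the former meets the Leinster identity.
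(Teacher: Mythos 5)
Your overall architecture is the same as the paper's---eliminate every candidate value of $|G'|$ except $q$, then solve the resulting arithmetic condition---and several of your eliminations are genuinely cleaner. Your quotient argument for $|G'|=p^2q$ (pass to $G/T_q$ of order $p^2r$ and note that $r\nmid|\Aut(P)|$ for $|P|=p^2$ since $r>p+1$) avoids the paper's appeal to GAP for the orders $60$ and $132$; and in the decisive case $|G'|=q$ your use of Theorem~\ref{sqfree} to pin down $G\cong(C_q\rtimes C_{p^2})\times C_r$ and compute $\sigma(G)$ exactly via Theorem~\ref{mul} replaces the paper's case analysis on which normal subgroups exist (and its final GAP call) with a direct computation; your equation $r=7+24/(q-3)$ agrees with the paper's.

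There is, however, a genuine gap: the case $|G'|=pqr$ with $G'$ non-abelian is not actually proved. ``One passes to $G''\lhd G$ and removes the remaining shapes by the counting-plus-Leinster-equation method of Lemmas~\ref{60}--\ref{62}'' is a gesture, not an argument: those lemmas constrain $|G'|$, not $|G''|$, and you do not say what the possible orders of $G''$ are or how knowing them would produce a contradiction. The paper closes this case (together with $|G'|=p^2r$) by showing $\tau(G)\le 7$ and invoking Theorem~\ref{sjb}, and some such count is what you are missing. Concretely: since $G/G'\cong C_p$, any normal subgroup whose quotient is abelian must contain $G'$, which kills orders $qr$, $p^2q$, $p^2r$ and forces uniqueness at order $pqr$; $T_p\ntriangleleft G$ (else $G'T_p=G$ would make $G/T_p$ a perfect group of order $qr$), killing orders $p^2$ and multiple subgroups of order $p$; and normal subgroups of orders $q$ and $r$ cannot coexist, since their product would be normal of order $qr$. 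That leaves at most the orders $1$, $p$, one of $\{q,r\}$, $pq$, $pr$, $pqr$, $p^2qr$, i.e.\ $\tau(G)\le 7$, contradicting Theorem~\ref{sjb}. You should either supply this count or an equivalent argument; note it also covers the abelian subcase, making your $Z(G)$ argument unnecessary. A minor further loose end is the dismissal of $p\ge 3$ in the final step: it deserves the one-line bound $r=\frac{1+p+q(1+p+p^2)}{q(p^2-p-1)-(p+1)}<q$ for $p\ge 3$, rather than ``finitely many small pairs to check by hand.''
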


\begin{proof}
In view of Theorem \ref{alg}, $G$ is non-abelian. Clearly, $G$ cannot be simple and consequently, $G$ is solvable, which implies $G' \neq G$. 

Now, if $\mid G' \mid=pqr$ or $p^2r$, then $\tau(G) \leq 7$, which is impossible by Theorem \ref{sjb}. 

Next, suppose $\mid G' \mid = p^2q$. In this situation, if $G$ has more than one normal subgroup of order $pq$, then $T_q \lhd G$ and hence $T_r \ntriangleleft G$. Now, from the definition of Leinster group, we have $\mid G \mid=12r$ and consequently, $\mid G \mid=60$ or $132$, which is impossible by GAP \cite{gap}. Therefore $G$ can have at the most one normal subgroup of order $pq$. But then $\tau(G) \leq 7$, which is again impossible by Theorem \ref{sjb}. 

Again, if $\mid G' \mid=p$, then by Theorem \ref{aqt} and Correspondence theorem, $T_p \vartriangleleft G$, which is a contradiction to Lemma \ref {506}. Therefore, in view of Lemma \ref{506}, Lemma \ref{6}, Lemma \ref{60}, Lemma \ref{61}, Lemma \ref{62}, we have $\mid G' \mid=q$. In the present scenario, by Theorem \ref{aqt} and Correspondence theorem, $G$ has unique normal subgroup for each of the following orders $q, pq, qr, p^2q, pqr$. Also, note that $G$ cannot have normal subgroup of order $p^2r$. 

Let $N$ be the normal subgroup of $G$ of order $p^2q$. Now, if $T_r \ntriangleleft G$, then $G$ cannot have a normal subgroup of order $pr$. In this situation, from the definition of Leinster groups, we have  $p^2qr=1+p+q+pq+qr+p^2q+pqr=(1+p)(1+q)+q(r+p^2+pr)$, noting that by Theorem \ref{sjb}, we have $\tau(G)>7$ , which is impossible.  Therefore $T_r \lhd G$. In the present scenario, if $G$ don't have a normal subgroup of order $p$, then from the definition of Leinster groups, we have $p^2qr=1+q+r+pr+pq+qr+p^2q+pqr$, i.e., $r=\frac{p^2q+pq+q+1}{p^2q-pq-p-q-1}$ or $p^2qr=1+q+r+pq+qr+p^2q+pqr$, i.e., $r=\frac{p^2q+pq+q+1}{p^2q-pq-q-1}$.  But in both the cases, one can verify that $p>7$, which is impossible.  Therefore $G$ must have a normal subgroup of order $p$ and consequently,  using the definition of Leinster groups, we have  $p^2qr=1+p+q+r+pq+qr+pr+p^2q+pqr$. In the present situation, one can verify that $p=2$ and consequently, $qr=3+7q+3r$, which implies  $r=(3+7q)/(q-3)=7+24/(q-3)$. Hence $q=5, r=19$ or $q=7, r=13$. Now, using GAP \cite{gap}, we have  $G \cong Q_{20}\times C_{19}$ or $Q_{28} \times C_{13}$.
\end{proof}

\section{ Leinster groups of order $pqrs$}

Given any primes $p<q<r<s$, by  \cite[ Corollary 3.2, Theorem 3.6]{baishya1}, the only Leinster group of order $pq$ is $C_6$ and  the only Leinster group of order $pqr$ is $S_3 \times C_5$. In this section, we consider the groups of order $pqrs$ and prove that no group of order $pqrs$ is Leinster. The following lemmas will be used to establish our result.

\begin{lem}\label{51}
Let $H$ be a group of squarefree order and $p$ be any odd prime, $p \nmid \mid H \mid $. If $G=H \times C_{2p}$, then $G$ is not Leinster.
\end{lem}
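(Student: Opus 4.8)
The plan is to argue by contradiction and to split according to the parity of $|H|$: the even case falls quickly to Theorem \ref{aqt}, while the odd case is the substantive one and will be settled by an abundancy estimate driven by Theorem \ref{sqfree}. Throughout I would assume $H$ is nontrivial, since for $H=1$ one has $G\cong C_{2p}$, which is Leinster exactly when $2p=6$; the interesting content is for $H\neq1$. Since $p$ is odd, $C_{2p}\cong C_2\times C_p$ and $\sigma(C_{2p})=1+2+p+2p=3(p+1)$, which I will use repeatedly.

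First I would dispose of the case where $|H|$ is even. As $|H|$ is squarefree, its Sylow $2$-subgroup is $C_2$ and $2$ is the least prime dividing $|H|$, so Theorem \ref{sqfree} gives this $C_2$ a normal complement; thus $H$ has a normal subgroup of index $2$ and surjects onto $C_2$. Combining this with the projection $C_{2p}\to C_2$ exhibits the non-cyclic group $C_2\times C_2$ as an abelian quotient of $G=H\times C_{2p}$. By Theorem \ref{aqt} a group with $\sigma\le 2|\cdot|$ has only cyclic abelian quotients, so this forces $\sigma(G)>2|G|$, and $G$ is not Leinster.

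The main case is $|H|$ odd. Then $\gcd(|H|,2p)=1$, so Theorem \ref{mul} gives $\sigma(G)=\sigma(H)\sigma(C_{2p})=3(p+1)\sigma(H)$, and if $G$ were Leinster I would obtain $3(p+1)\sigma(H)=2|G|=4p|H|$, i.e.
\[
\sigma(H)=\frac{4p}{3(p+1)}\,|H|<\frac{4}{3}\,|H|.
\]
The hard part is to rule this identity out for every admissible $H$, and the observation that makes it possible is that the identity itself pins down a small prime divisor: reading it with $p=3$ would give $\sigma(H)=|H|$, impossible for $H\neq1$, so $p\neq3$; then reducing $3(p+1)\sigma(H)=4p|H|$ modulo $3$ and using $\gcd(3,4p)=1$ forces $3\mid|H|$. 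This is the crux. Since $H$ has odd order, $3$ is now the smallest prime dividing $|H|$ and its Sylow $3$-subgroup is the cyclic group $C_3$, so Theorem \ref{sqfree} supplies a normal subgroup $K\trianglelefteq H$ of index $3$. Counting the normal subgroups $1,K,H$ then gives $\sigma(H)\ge \tfrac43|H|$ (indeed $\sigma(H)\ge 1+\tfrac13|H|+|H|$ once $|H|>3$), which contradicts the strict inequality above; the only borderline point, $|H|=3$, is handled directly since there $\sigma(H)=4=\tfrac43|H|>\tfrac{4p}{3(p+1)}|H|$. Hence no odd $H$ works either, and $G$ is never Leinster.
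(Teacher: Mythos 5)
Your proof is correct and follows essentially the same route as the paper: both arguments dispose of even $|H|$ via two distinct index-$2$ normal subgroups (equivalently, a $C_2\times C_2$ quotient), and for odd $|H|$ both exploit $\sigma(C_{2p})=3(p+1)$ to force $3\mid |G|$ and then use Theorem \ref{sqfree} to produce an index-$3$ normal subgroup yielding a contradiction. Your write-up is somewhat more explicit than the paper's (which leaves the final abundancy count implicit), and your handling of the edge cases $H=1$ and $|H|=3$ is a welcome extra care.
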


\begin{proof}
In view of Theorem \ref{alg}, $G$ is non-abelian. Now, if  $ \mid H \mid$ is even, then $H$ has a subgroup $N$ of index $2$. In the present scenario, $N \times C_{2p}$ and $H \times C_p$ are two distinct subgroups of $G$ of index $2$ and hence $G$ is not Leinster. Next, suppose $ \mid H \mid$ is odd. Then using Theorem \ref{mul}, we have $3 \mid\sigma (G)$ and so if $G$ is Leinster, then $3 \mid \mid G \mid$. Therefore $G$ will have a normal subgroup of index $2$ and a normal subgroup of index $3$. Hence $G$ is not Leinster.
\end{proof}

\begin{lem}\label{52}
If $G$ is a Leinster group of  order $pqrs$, $p<q<r<s$ being primes, then  $8 \leq \tau (G) \leq 10 $. 
\end{lem}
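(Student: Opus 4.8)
\emph{Lower bound.} Since $p<q<r<s$ are distinct, $|G|=pqrs$ is squarefree; in particular $G\not\cong C_7\rtimes C_8$, whose order $56=2^3\cdot 7$ is not squarefree. Hence Theorem \ref{sjb} applies and gives $\tau(G)>7$, that is, $\tau(G)\ge 8$. This part is essentially immediate.

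\emph{Strategy for the upper bound.} By Theorem \ref{alg} the group $G$ is non-abelian, and being of squarefree order it is solvable, so $1\neq G'\neq G$; moreover every Sylow subgroup of $G$ is cyclic of prime order. The plan is to exploit the rigidity of such groups: a group all of whose Sylow subgroups are cyclic is metacyclic, with $G'$ cyclic, $G/G'$ cyclic (this last fact also follows from Theorem \ref{aqt}), and $\gcd(|G'|,|G/G'|)=1$. Writing $G'=\langle a\rangle$ and choosing a cyclic complement $C=\langle b\rangle$ with $bab^{-1}=a^{t}$, the condition that $G'$ be the \emph{full} derived subgroup forces $\gcd(t-1,|G'|)=1$. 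From this I would extract the key statement that \emph{any two normal subgroups of $G$ of the same order coincide}: each normal subgroup $N$ is pinned down by the pair $(|N\cap G'|,\,|NG'/G'|)$, the coprimality $\gcd(t-1,|G'|)=1$ eliminates any freedom in the ``mixed part'' of $N$, and since $|G'|$ and $|G/G'|$ are coprime the order $|N|=|N\cap G'|\cdot|NG'/G'|$ recovers this pair. Thus normal subgroups of $G$ have pairwise distinct orders.

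\emph{Reduction to a lattice count.} Consequently the set $\mathcal{L}$ of orders of normal subgroups of $G$ consists of distinct divisors of $n=pqrs$, contains $1$ and $n$, and is closed under $\mathrm{lcm}$ and $\gcd$ (coming from $N_1,N_2\mapsto N_1N_2$ and $N_1\cap N_2$); hence $\mathcal{L}$ is a sublattice of the divisor lattice of the squarefree number $n$, i.e.\ of the Boolean lattice on $\{p,q,r,s\}$, and $\tau(G)=|\mathcal{L}|$. The Leinster condition then reads $\sum_{d\in\mathcal{L}}d=2n$, equivalently $\sum_{d\in\mathcal{L},\,1<d<n}d=n-1$. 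To obtain $\tau(G)\le 10$ it now suffices to show that no sublattice $\mathcal{L}$ of this $16$-element lattice containing $1$ and $n$ with $|\mathcal{L}|\ge 11$ satisfies $\sum_{d\in\mathcal{L}}d=2n$. Equivalently, the omitted divisors form a set $\mathcal{O}$ with $|\mathcal{O}|\le 5$, avoiding $1$ and $n$, whose complement is a sublattice and whose sum equals the excess $E:=(1+p)(1+q)(1+r)(1+s)-2pqrs$. I would then run through the finitely many admissible $\mathcal{O}$ of size at most $5$, matching $\sum_{d\in\mathcal{O}}d=E$ and using both the closure constraints and the divisibility relations among $p,q,r,s$, to reach a contradiction in each case.

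\emph{Main obstacle.} The two delicate points are (i) the rigidity claim that normal subgroups have pairwise distinct orders, which relies on the metacyclic description of groups with cyclic Sylow subgroups together with $\gcd(|G'|,|G/G'|)=1$; and (ii) the borderline small tuples $(p,q,r,s)$, e.g.\ $(2,3,5,7)$ and $(2,3,5,11)$, where crude size estimates on $E$ are too weak (a naive ``five largest omitted divisors'' bound does not exclude $\tau\ge 11$), so one must invoke the sublattice-closure restrictions on $\mathcal{O}$ and, for the finitely many genuinely small candidate orders that survive, a direct verification in GAP \cite{gap}. The complementary regime where $|G'|$ is large is comparatively easy, since then $G/G'$ has few prime factors and $\mathcal{L}$ is automatically small.
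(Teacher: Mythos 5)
Your lower bound is fine and coincides with the paper's: Theorem \ref{sjb} applies since $pqrs$ is squarefree, giving $\tau(G)\ge 8$. Your rigidity claim that normal subgroups of $G$ have pairwise distinct orders is also correct (and is exactly what the paper asserts), though your detour through the metacyclic structure is unnecessary: if $N_1,N_2\lhd G$ have equal order $d$ and $e=|N_1\cap N_2|$, then $|N_1N_2|=d^2/e$ divides the squarefree number $|G|$, which forces $e=d$ and $N_1=N_2$. Likewise your observation that the set $\mathcal{L}$ of orders is a sublattice of the divisor lattice is correct.

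The genuine gap is that the upper bound $\tau(G)\le 10$ is never actually proved: your argument stops at a plan to ``run through the finitely many admissible $\mathcal{O}$,'' and you concede yourself that for small tuples such as $(2,3,5,7)$ the size estimates on the excess $E$ do not close the case and that you would fall back on an unperformed GAP check. That concession is fatal as written: there are infinitely many tuples $(p,q,r,s)$, so ``finitely many cases'' applies only to the sublattice shapes, and for each shape one still has to rule out all prime solutions of a Diophantine equation before any GAP verification of specific orders is even meaningful; none of this is carried out. The paper closes precisely these cases with group-theoretic input that your purely combinatorial reduction discards: for $\tau(G)=12$ it shows $G\cong N_{ab}\times C_{cd}$ with coprime factors and uses the multiplicativity of $\sigma$ (Theorem \ref{mul}) to get $4\mid\sigma(G)$ or $8\mid\sigma(G)$, contradicting $\sigma(G)=2|G|$ with $|G|$ squarefree (plus Lemma \ref{51} when $C_{cd}$ has even order), and for $\tau(G)=11$ it uses the bounds of at most three normal subgroups of prime index and at most three of order a product of two primes. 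If you want to salvage your lattice framework, you should at least import this multiplicativity: when $\mathcal{L}$ is large it factors as a product of intervals, so $\sum_{d\in\mathcal{L}}d$ factors, and the resulting divisibility constraints kill the borderline tuples without any enumeration.
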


\begin{proof}
In view of Theorem \ref{alg}, $G$ is non abelian. Consequently, $\tau(G) \leq 12$, noting that every normal subgroup of $G$ is uniquely determined by its order.

Now, suppose $\tau (G)=11$. In view of Theorem \ref{mul}, $G$ can have at the most $3$ normal subgroups of prime index and at the most $3$ normal subgroups of order product of two primes. Consequently, $G$ is not leinster.

Next, suppose $\tau (G)=12$. Then we have $G= N_{ab} \times C_{cd}$, where  $N_{ab}$ is the unique normal subgroup of order $ab$ and $a, b, c, d \in \lbrace p, q, r, s \rbrace$. Now, if $\mid G \mid$ is odd, then using Theorem \ref{mul}, we have $4 \mid \sigma (G)$ and hence $G$ is not Leinster. Next, suppose $\mid G \mid$ is even. In the present situation, if $\mid C_{cd} \mid $ is odd, then $\mid N_{ab} \mid$ must be even and by Theorem \ref{mul}, we have $8 \mid \sigma (G)$, and hence $G$ is not Leinster. On the other hand if $\mid C_{cd} \mid $ is even, then by Lemma \ref{51}, $G$ is not Leinster. Now, the result follows from Theorem \ref{sjb}.
\end{proof}

We begin with the case when $\tau(G)=8$.

\begin{rem}\label{rem133}
Let $G$ be a  group of  order $pqrs$, $p<q<r<s$ being primes and $\tau (G) = 8$. If $G$ is Leinster, then we have:
\begin{enumerate}
	\item  $\mid G' \mid = qr, qs$ or $rs$.
	\item  $p=2$.
	\item  $G$ has atleast one normal subgroup of prime order and has atleast $2$ normal subgroups whose orders are product of two primes.
	\item  $3 \nmid \mid G \mid$.
\end{enumerate} 
\end{rem}

\begin{proof}
a) If $\mid G' \mid$ is prime, then $\tau(G) \geq 9$. Next, suppose $G'$ is of prime index. Then in view of Theorem \ref{sqfree}, we have $\mid G' \mid=qrs$. Now, if $p=2$,  then by Theorem \ref{zumud1}, $\mid Z(G) \mid=1$, and hence  $G$ is a dihedral group, which is impossible by \cite[Example 2.4]{leinster}. Hence $\mid G \mid$ is odd. In the present scenario, from the definition of Leinster groups, we have $\mid G \mid =1+n_1+n_2+n_3+n_4+n_5+n_6$, where $n_1>n_2>n_3>n_4>n_5>n_6$ are the orders of the normal subgroups of $G$.
Consequently,  we must have $n_2 > \frac{\mid G \mid}{9}$, which is impossible. Therefore $ \mid G' \mid=qr, qs$ or $rs$, noting that we cannot have $G =G'$. 

b)  If $p\neq 2$, then by (a), we have  $\sigma(G)-\mid G \mid \leq \frac{\mid G \mid}{3}+  \frac{\mid G \mid}{5}+ \frac{\mid G \mid}{15}+ \frac{4\mid G \mid}{21} < \mid G \mid$.

c) In view of (a), $G$ has exactly $2$ normal subgroups of prime index. Now, the result follows from the fact that $\tau(G)=8$.

d) If $\mid G \mid = 2.3rs$, then in view of (a), we have $\mid G' \mid = 3r$ or $3s$.

Now, suppose $\mid G' \mid = 3s$. Then in view of (c), by the definition of Leinster groups, we have 
$\mid G \mid =1+3rs+2.3s+3s+x+y+z$, where $x, y, z$ are the orders of the remaining normal subgroups such that $x$ is a product of two primes. In the present situation, if $x=2s$, then by Remark \ref{rem125}, we have $\lbrace y, z \rbrace = \lbrace 2, s \rbrace$, which is impossible.
Similarly, if $x=2.3$, then by Remark \ref{rem125}, we have $\lbrace y, z \rbrace = \lbrace 2, 3 \rbrace$, which is impossible.
Finally, if  $x=rs$, then by Remark \ref{rem125}, we have $\lbrace y, z \rbrace = \lbrace 3, s \rbrace$ or $\lbrace r, s \rbrace$, which is also impossible.
Therefore, we must have $x=3r$, noting that in the present scenario, we cannot have a normal subgroup of order $2r$.
But then, by Remark \ref{rem125}, we have $\lbrace y, z \rbrace = \lbrace 3, r \rbrace$ or $\lbrace 3, s \rbrace$, which is again impossible.

Next, suppose $\mid G' \mid = 3r$.  Then in view of (c), by the definition of Leinster groups, we have 
$\mid G \mid =1+3rs+2.3r+3r+x+y+z$, where $x, y, z$ are the orders of the remaining normal subgroups such that $x$ is a product of two primes. In the present situation, if $x=2r$, then by Remark \ref{rem125}, we have $\lbrace y, z \rbrace = \lbrace 2, r \rbrace$, which is impossible.
Similarly, if $x=2.3$, then by Remark \ref{rem125}, we have $\lbrace y, z \rbrace = \lbrace 2, 3 \rbrace$, which is impossible.
Again, if  $x=rs$, then by Remark \ref{rem125}, we have $\lbrace y, z \rbrace = \lbrace 3, r \rbrace$ or $\lbrace 3, s \rbrace$, which is also impossible.
Finally, if $x=3s$, then by Remark \ref{rem125}, we have $\lbrace y, z \rbrace = \lbrace 3, r \rbrace$ or $\lbrace 3, s \rbrace$, which is again impossible. Hence  $3 \nmid \mid G \mid$, noting that in the present scenario, we cannot have a normal subgroup of order $2s$.
\end{proof}

\begin{lem}\label{511}
If $G$ is a  group of  order $pqrs$, $p< q<r<s$ being primes, and $\tau (G) = 8$, then $G$ is not Leinster.
\end{lem}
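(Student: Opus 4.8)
Assuming for contradiction that $G$ is Leinster, I would first feed the hypothesis $\tau(G)=8$ into Remark \ref{rem133}, which already supplies the essential skeleton: $p=2$, $3\nmid\mid G\mid$ (so that $q\geq 5$), and $\mid G'\mid\in\{qr,qs,rs\}$. Let $t$ denote the odd prime dividing $\mid G\mid$ that does \emph{not} divide $\mid G'\mid$, so that $t=s,r,q$ according as $\mid G'\mid=qr,qs,rs$. By Theorem \ref{aqt} the quotient $G/G'$ is cyclic, necessarily of order $2t$, so the normal subgroups of $G$ containing $G'$ are exactly the four preimages of the subgroups of $C_{2t}$, with orders $\mid G'\mid,\,2\mid G'\mid,\,t\mid G'\mid,\,2t\mid G'\mid=\mid G\mid$; their orders add up to $\mid G'\mid(1+2)(1+t)=3\mid G'\mid(1+t)$.

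Together with the trivial subgroup this accounts for five of the eight normal subgroups, so there remain exactly three further normal subgroups, of orders $x,y,z$, each proper and none containing $G'$. The decisive observation is that none of these can have prime index: a normal subgroup of prime index has an abelian quotient, hence contains $G'$ and lies among the four already listed, and since $G/G'\cong C_{2t}$ has exactly two subgroups of prime index there are only two such subgroups in total. Hence each of $x,y,z$ is a product of at most two of the primes $2,q,r,s$ and is not divisible by $\mid G'\mid$. Imposing $\sigma(G)=2\mid G\mid$ and using $\mid G\mid=2t\mid G'\mid$ gives, after cancellation,
\[
x+y+z=\mid G'\mid(t-3)-1 .
\]

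The concluding step is a size estimate. Since each of $x,y,z$ is a product of at most two primes and is not divisible by $\mid G'\mid$, I would bound the left-hand side by the sum of the three largest admissible such orders, which works out to $qs+qr+2s$ when $\mid G'\mid=rs$, to $rs+qr+2s$ when $\mid G'\mid=qs$, and to $rs+qs+2s$ when $\mid G'\mid=qr$. Comparing each of these with $\mid G'\mid(t-3)-1$ and dividing through by $s$ collapses all three cases to an inequality of the shape $r(q-3)<2q+3$ (case $rs$) or $r(q-1)<4q+3$ (cases $qs,qr$); since $q\geq 5$ and $r>q$ (so $r\geq q+2\geq 7$) these are violated outright, contradicting the assumption that $G$ is Leinster.

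I expect the only real content to be the middle paragraph: recognising that the clause ``exactly two normal subgroups of prime index'' forces the three undetermined subgroups to have orders with at most two prime factors is precisely what caps them below $\mid G'\mid$. Once that is in place the final inequalities are routine; the main care required is to verify, in each of the three cases for $\mid G'\mid$, that the three largest admissible orders are the ones listed, and to confirm that even the borderline value $q=5$ leaves no admissible pair $q<r<s$.
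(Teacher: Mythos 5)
Your proof is correct and follows essentially the same strategy as the paper's: feed $\tau(G)=8$ into Remark \ref{rem133} to get $p=2$, $3\nmid |G|$ and $|G'|\in\{qr,qs,rs\}$, then derive a contradiction from a size estimate showing $\sigma(G)<2|G|$. The only difference is bookkeeping --- the paper bounds all seven proper normal subgroup orders as fractions of $|G|$ (splitting on whether $qr>2s$ or $qr<2s$), whereas you compute the contribution of the four normal subgroups containing $G'$ exactly via the cyclic quotient $G/G'\cong C_{2t}$ and bound only the remaining three (splitting on the value of $|G'|$); both reduce to the same arithmetic contradiction using $q\geq 5$, $r\geq 7$.
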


\begin{proof}
If $qr > 2s$, then in view of Remark \ref{rem133}, we have
\begin{align*}
\sigma(G)- \mid G \mid \leq 1+ \frac{\mid G \mid}{2}+\frac{\mid G \mid}{5}+\frac{\mid G \mid}{10}+\frac{\mid G \mid}{14}+\frac{2\mid G \mid}{22} < \mid G \mid.
\end{align*}

Again, if $qr < 2s$, then in view of Remark \ref{rem133}, we have
\begin{align*}
\sigma(G)- \mid G \mid \leq 1+ \frac{\mid G \mid}{2}+\frac{\mid G \mid}{5}+\frac{\mid G \mid}{10}+\frac{\mid G \mid}{14}+\frac{2\mid G \mid}{35} < \mid G \mid.
\end{align*}
\end{proof}

Now, we consider the case where $\tau (G) = 9$.

\begin{rem}\label{rem123}
Let $G$ be a group of  order $pqrs$, $p<q<r<s$ being primes and  $\tau (G) = 9$. If $G$ is Leinster, then we have:
\begin{enumerate}
	\item  $p=2$.
	\item  $T_2 \ntriangleleft G$.
    \item  $\mid G' \mid = qr, qs$ or $rs$.
	\item  $G$ has exactly $2$ normal subgroups of prime index and has exactly $2$ normal subgroups of prime order.
	\item $3 \nmid \mid G \mid$.
\end{enumerate} 
\end{rem}  

\begin{proof}
a) It follows from Remark \ref{rem125}.

b) It follows from Theorem \ref{mul}, noting that by Theorem \ref{sqfree}, $G$ has a normal subgroup of index $2$.

c) Since $\tau(G)=9$, therefore $\mid G' \mid$ cannot be prime. Again, if $G'$ is of prime index, then in view of Theorem \ref{sqfree}, we have $\mid G' \mid =qrs$. In the present scenario, by Theorem \ref{zumud1}, $\mid Z(G) \mid=1$, and hence  $G$ is a dihedral group, which is  impossible by \cite[Example 2.4]{leinster}.  Therefore we must have $\mid G' \mid=qr, qs$ or $rs$, noting that we cannot have $G =G'$. 

d)  In view of (c), $G$ has exactly two normal subgroups of prime index. In the present scenario, if $G$ has less than two normal subgroups of prime order, then using Theorem \ref{mul},  we have $\tau(G)>9$. Hence, $G$ has exactly $2$ normal subgroups of prime order, noting that if $G$ has more than $2$ normal subgroup of prime order, then also  we have $\tau(G)>9$.

e) If $\mid G \mid=2.3rs$, then in view of (c), we have $\mid G' \mid=3r$ or $3s$. 
Now, suppose $\mid G' \mid=3r$.  In the present situation, if $G$ has a normal subgroup of order $rs$, then in view of (d) and Remark \ref{rem125}, from the definition of Leinster groups, we have 
\begin{align*}
6rs=1+3rs+2.3r+3r+rs+r+2.3+3,
\end{align*}
 which is impossible. 

Next, suppose $G$ has a normal subgroup of order $3s$. Then  in view of (d) and Remark \ref{rem125}, from the definition of Leinster groups, we have
\begin{align*}
6rs=1+3rs+2.3r+3r+3s+3+x+y,
\end{align*}
where $x, y$ are the orders of the remaining normal subgroups such that $x$ is a product of two primes. But then, clearly, $x$ cannot be odd, otherwise $y$ will also be odd. Again, $x \neq 2r, 2s$, noting that  $\mid G' \mid=3r$. Therefore $x=2.3$, which is again impossible. 

Therefore, we must have $\mid G' \mid=3s$. In the present situation, if $G$ has a normal subgroup of order $rs$, then in view of (d) and Remark \ref{rem125}, from the definition of Leinster groups, we have 
\begin{align*}
6rs=1+3rs+2.3s+3s+rs+s+2.3+3,
\end{align*}
which is impossible. 

Next, suppose $G$ has a normal subgroup of order $3r$. Then  in view of (d) and Remark \ref{rem125}, from the definition of Leinster groups, we have
\begin{align*}
6rs=1+3rs+2.3s+3s+3r+3+x+y,
\end{align*}
where $x, y$ are the orders of the remaining normal subgroups such that $x$ is a product of two primes. But then, clearly, $x$ cannot be odd, otherwise $y$ will also be odd. Again, $x \neq 2r, 2s$, noting that  $\mid G' \mid=3s$. Therefore $x=2.3$, which is again impossible.  Therefore  we have $3 \nmid \mid G \mid$.
\end{proof}

\begin{lem}\label{257}
If $G$ is a  group of  order $pqrs$, $p< q<r<s$ being primes, and $\tau (G) = 9$, then $G$ is not Leinster.
\end{lem}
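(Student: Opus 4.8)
The plan is to run a bounding argument parallel to that of Lemma~\ref{511}, after first pinning down the lattice of normal subgroups as tightly as Remark~\ref{rem123} permits. By Remark~\ref{rem123} we may assume $p=2$ and $3 \nmid \mid G \mid$, so that $q \geq 5$, $r \geq 7$ and $s \geq 11$; moreover $\mid G' \mid \in \lbrace qr, qs, rs \rbrace$, and $G$ has exactly two normal subgroups of prime index and exactly two of prime order. Since $\tau(G)=9$, counting the trivial subgroup, $G$ itself, the two prime-index subgroups and the two prime-order subgroups leaves exactly three normal subgroups whose orders are products of two primes, one of which is $G'$.

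First I would describe the two prime-index subgroups explicitly. Every normal subgroup of prime index has cyclic, hence abelian, quotient and therefore contains $G'$; and by Theorem~\ref{aqt} the quotient $G/G'$ is cyclic of order $2t$, where $t \in \lbrace q, r, s \rbrace$ is the prime not dividing $\mid G' \mid$. The only proper nontrivial subgroups of $C_{2t}$ have orders $2$ and $t$, so the two prime-index normal subgroups of $G$ are exactly the one of order $qrs$ (which also exists by Theorem~\ref{sqfree}) and the one of order $2\mid G' \mid$. Writing $M$ for the latter, we have $M + \mid G' \mid = 3 \mid G' \mid$. Next I would invoke Remark~\ref{rem125}: among the nine normal subgroups, the trivial one, $G'$, the two of prime order and the one of order $qrs$ are odd, while $M$ and $G$ are even, so for the number of odd-order normal subgroups to be even, exactly one of the remaining two two-prime subgroups must be odd and the other even.

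With this in hand, the Leinster condition $\sigma(G) = 2 \mid G \mid$ reduces, after substituting $M = 2\mid G' \mid$ and isolating the index-$2$ subgroup of order $qrs$, to
\begin{align*}
A + B + C + D = qrs - 3\mid G' \mid - 1,
\end{align*}
where $A, B$ are the two remaining two-prime subgroups (one odd, one even) and $C, D$ are the two prime-order subgroups. I would then split into the three cases $\mid G' \mid = rs$, $qs$, $qr$. In each case the parity restriction bounds $A+B$ by the largest odd product of two primes different from $\mid G' \mid$ together with the largest even such product $2s$, while $C + D \leq r + s$; this yields an upper bound of the shape $rs + 3s + r$, or $qs + 3s + r$ in the case $\mid G' \mid = rs$. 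A direct comparison then shows that for $q \geq 5$, $r \geq 7$ and $s \geq 11$ the right-hand side $qrs - 3\mid G' \mid - 1$ strictly exceeds this bound, so the displayed equality cannot hold and $G$ is not Leinster.

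The main obstacle is the structural step rather than the arithmetic: one must be sure that the two prime-index subgroups are forced to be exactly $qrs$ and $2\mid G' \mid$, which is what collapses the many a priori configurations into just three, and that the parity count of Remark~\ref{rem125} correctly constrains the two leftover two-prime subgroups. Once the sum $A+B+C+D$ is bounded as above, the three numerical inequalities are routine and entirely analogous to those in Lemma~\ref{511}.
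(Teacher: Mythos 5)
Your proof is correct, and it rests on the same foundation as the paper's: both take the constraints of Remark \ref{rem123} ($p=2$, $3\nmid \mid G\mid$, $\mid G'\mid \in \lbrace qr,qs,rs\rbrace$, exactly two prime-index and two prime-order normal subgroups) and then show that the nine normal subgroup orders cannot sum to $2\mid G\mid$. The execution differs, though. The paper's proof is a pure fractional bound: it estimates each of the eight proper normal subgroups by the relevant fraction of $\mid G\mid$ --- $\mid G\mid/2$ for the index-$2$ subgroup, $\mid G\mid/5$ for the other prime-index subgroup, $\mid G\mid/10+\mid G\mid/14+\max\lbrace \mid G\mid/22,\mid G\mid/35\rbrace$ for the three two-prime subgroups (splitting only on whether $qr>2s$ or $qr<2s$), and $3\mid G\mid/70$ for the trivial and two prime-order subgroups --- and checks the total falls short of $\mid G\mid$. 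You instead identify the two prime-index subgroups exactly (orders $qrs$ and $2\mid G'\mid$, via the cyclic quotient $G/G'$ of order $2t$), bring in the parity constraint of Remark \ref{rem125} to force exactly one of the two leftover two-prime subgroups to have even order, and reduce to the exact equation $A+B+C+D=qrs-3\mid G'\mid-1$, which you then refute case by case on $\mid G'\mid$. Both close; yours buys a sharper structural picture (and slightly tighter bounds) at the cost of a three-way case analysis, while the paper's cruder estimate finishes in two displayed inequalities. One small point: your claim that the two prime-order normal subgroups have odd order silently uses part (b) of Remark \ref{rem123}, namely $T_2\ntriangleleft G$ (a normal subgroup of order $2$ would be a normal Sylow $2$-subgroup), so you should cite that explicitly alongside the parts of the remark you list.
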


\begin{proof}
If $qr > 2s$, then in view of  Remark \ref{rem123}, we have
\begin{align*}
\sigma(G)- \mid G \mid \leq \frac{\mid G \mid}{2}+\frac{\mid G \mid}{5}+\frac{\mid G \mid}{10}+\frac{\mid G \mid}{14}+\frac{\mid G \mid}{22}+\frac{3\mid G \mid}{70} < \mid G \mid.
\end{align*}

Again, if $qr < 2s$, then in view of  (e) and (f) of Remark \ref{rem123}, we have
\begin{align*}
\sigma(G)- \mid G \mid \leq \frac{\mid G \mid}{2}+\frac{\mid G \mid}{5}+\frac{\mid G \mid}{10}+\frac{\mid G \mid}{14}+\frac{\mid G \mid}{35}+\frac{3\mid G \mid}{70} < \mid G \mid.\end{align*}
\end{proof}

Finally, we consider the case where $\tau(G)=10$.

\begin{rem}\label{rem923}
Let $G$ be a group of  order $pqrs$, $p<q<r<s$ being primes and  $\tau (G) = 10$. If $G$ is Leinster, then we have:
\begin{enumerate}
	\item  $p=2$.
	\item $3 \nmid \mid G \mid$.
	\item  $T_2 \ntriangleleft G$.
	\item  $\mid G' \mid = qr, qs$ or $rs$.
\end{enumerate} 
\end{rem}

\begin{proof}
a)  Suppose, $p>2$. It is easy to see that $G'$ cannot be of prime index,  noting that in view of Theorem \ref{mul}, $G$ can have at the most three normal subgroup whose order is product of two primes. 

Now, suppose $\mid G' \mid$ is prime. Note that, in view of Theorem \ref{sqfree}, $\mid G' \mid \neq p$. Next, suppose $\mid G' \mid = q$. In this situation, if $T_p \lhd G$, then $q \mid p+1$, which is a contradiction. Again, if $T_r \lhd G$ or $T_s \lhd G$, then by Correspondence theorem, we have a normal subgroup of order $rs$, which is impossible. Therefore  $\mid G' \mid \neq q$. On the other hand, if  $\mid G' \mid = s$, then we have $s \leq {r+1} $, which is impossible. Therefore $\mid G' \mid = r$ and consequently, in view of Theorem \ref{mul}, from the definition of Leinster groups, we have
\begin{align*}
 \sigma(G)-\mid G \mid=1+r+s+pr+qr+rs+pqr+prs+qrs.
\end{align*}
Now, suppose $pqr > rs$. Then $pr<qr<rs<pqr<prs<qrs$. In the present situation, clearly we have $qrs \geq \frac{\mid G \mid}{3}, prs \geq \frac{\mid G \mid}{5}, pqr \geq \frac{\mid G \mid}{11}, rs \geq \frac{\mid G \mid}{15}, qr \geq \frac{\mid G \mid}{33}$ and $pr \geq \frac{\mid G \mid}{55}$. But then, $\sigma(G)-\mid G \mid < \mid G \mid$.

Next, suppose $pqr< rs$. Then $pr<qr<pqr<rs<prs<qrs$. In the present situation, clearly we have $qrs \geq \frac{\mid G \mid}{3}, prs \geq \frac{\mid G \mid}{5}, rs \geq \frac{\mid G \mid}{15},, pqr \geq \frac{\mid G \mid}{17},  qr \geq \frac{\mid G \mid}{51}$ and $pr \geq \frac{\mid G \mid}{85}$. But then also, we have $\sigma(G)-\mid G \mid < \mid G \mid$.

Therefore  $\mid G' \mid$ has to be product of two primes. In the present scenario, $G$ can have exactly two normal subgroups of prime index and in view of Theorem \ref{mul}, $G$ can have exactly three normal subgroups of order product of two primes. 

Now, suppose $qr >ps$. Then we have 
\begin{align*}
\sigma(G)-\mid G \mid \leq 1+q+r+s+qr+qs+rs+prs+qrs.
\end{align*}

 In the present situation, clearly we have $qrs \geq \frac{\mid G \mid}{3}, prs \geq \frac{\mid G \mid}{5}, rs \geq \frac{\mid G \mid}{15}, qs \geq \frac{\mid G \mid}{21}, qr \geq \frac{\mid G \mid}{33}$ and $s \geq \frac{\mid G \mid}{105}$. But then,  $\sigma(G)-\mid G \mid < \mid G \mid$.

Therefore we must have $qr< ps$, and consequently, 
\begin{align*}
\sigma(G)-\mid G \mid \leq 1+q+r+s+ps+qs+rs+prs+qrs.
\end{align*}

 In the present situation, clearly we have $qrs \geq \frac{\mid G \mid}{3}, prs \geq \frac{\mid G \mid}{5}, rs \geq \frac{\mid G \mid}{15}, qs \geq \frac{\mid G \mid}{21}, ps \geq \frac{\mid G \mid}{35}$ and $s \geq \frac{\mid G \mid}{105}$. But then also, we have $\sigma(G)-\mid G \mid < \mid G \mid$. Therefore $p=2$.

b) In view of (a), suppose $ \mid G \mid=2.3rs$. Note that, by Correspondence theorem, $T_2 \ntriangleleft G$. In the present scenario, in view of Theorem \ref{sqfree}, one can verify that $\mid G' \mid=3r$ or $3s$,  and consequently, $T_3, T_r, T_s \lhd G$, noting that $\mid G' \mid \neq 3, r, s$.  

Now, suppose $\mid G' \mid=3r$. Then we  have $G=T_s \times (C_{3.r} \rtimes C_2)$. But then, from the definition of Leinster groups, using Theorem \ref{mul}, we have $12rs=(s+1)(1+r+3+3r+6r)$, which is a contradiction. 

Next, suppose  $\mid G' \mid=3s$. Then we  have $G=T_r \times (C_{3s} \rtimes C_2)$. But then, again from the definition of Leinster groups, using Theorem \ref{mul}, we have  $12rs=(r+1)(1+s+3+3s+6s)$, which is again a contradiction. Therefore $3 \nmid \mid G \mid$.

c) It follows from (b), using Theorem \ref{mul}.

d) Clearly, $G'$ cannot be of prime index. Now, suppose $\mid G' \mid$ be a prime. In the present situation, one can verify that $\mid G' \mid =q$ or $r$ and consequently, $s=\frac{1+r+3q+3qr}{qr-3q}$ or $s=\frac{1+3r+3qr}{qr-3r-1}$. But in both the cases, one can verify that $q>13$, which is impossible. Hence the result follows, noting that we cannot have $G=G'$.
\end{proof}

\begin{lem}\label{757}
Let $G$ be a group of  order $pqrs$, $p<q<r<s$ being  primes. If $\tau (G)=10$, then $G$ is not leinster. 
\end{lem}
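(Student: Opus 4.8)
The plan is to combine the structural constraints already extracted in Remark \ref{rem923} with a reciprocal-index counting estimate in the spirit of Lemmas \ref{511} and \ref{257}. By Remark \ref{rem923} I may assume $p=2$, that $3 \nmid \mid G \mid$ (so $q \geq 5$, $r \geq 7$, $s \geq 11$), that $T_2 \ntriangleleft G$, and that $\mid G' \mid \in \{qr, qs, rs\}$. First I would pin down the shape of the normal subgroup lattice. Since $G/G'$ is abelian it is cyclic by Theorem \ref{aqt}, and it has order $2m$, where $m$ is the prime not dividing $\mid G' \mid$; hence the normal subgroups containing $G'$ form a chain, and $G$ has exactly two normal subgroups of prime index, namely the one of order $qrs$ (which exists by Theorem \ref{sqfree}, index $2$) and the one of order $\mid G \mid / m$.

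Next I would classify the nine proper normal subgroups by the number of prime factors of their order. Every normal subgroup of prime index contains $G'$, so there are exactly two whose order is a product of three primes; there is the trivial subgroup; $T_2 \ntriangleleft G$ excludes a normal subgroup of order $2$; and each of the rest has order either a product of two primes or a single odd prime. Writing $a$ and $b$ for the numbers of the latter two types, the hypothesis $\tau(G)=10$ forces $1+2+a+b=9$, i.e. $a+b=6$.

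For the estimate, observe that the order-$\mid G \mid/m$ subgroup has order at most $2rs$ (the value attained when $m=q$, i.e. when $\mid G' \mid = rs$), so in all three cases for $\mid G' \mid$ we get
\[
\sigma(G) - \mid G \mid \;\le\; 1 + qrs + 2rs + S,
\]
where $S$ is the sum of the six largest admissible orders drawn from $rs, qs, qr, 2s, 2r, 2q, s, r, q$. One checks that $rs, qs, qr, 2s$ always lie among the six largest, so the remaining two slots are settled by comparing the two-prime products $2r, 2q$ with the prime $s$; I would split into the resulting subcases (as the paper does with $qr \gtrless 2s$), bound each term by its value at the extremal primes $q=5, r=7, s=11$, and verify that the total is strictly below $\mid G \mid = 2qrs$. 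Since every summand is a decreasing function of the primes, the tightest configuration is $(q,r,s)=(5,7,11)$, for which the bound already gives $\sigma(G) - \mid G \mid < \mid G \mid$; this contradicts $\sigma(G)=2\mid G \mid$.

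The main obstacle is the bookkeeping inside this estimate. I must correctly identify, for each possibility of $\mid G' \mid$ and in each ordering subcase, exactly which orders are admissible — in particular discarding the two three-prime-product orders that would force a spurious third prime-index subgroup — and then confirm that the six retained orders are genuinely the largest, so that the reciprocal-index sum is truly maximised. The case $(5,7,11)$ is delicate because the bound sits very close to $1$, so the comparisons among $2s$, $qr$, $2r$, $2q$ and $s$ must be handled carefully to be certain that no admissible configuration pushes $\sigma(G)$ up to $2\mid G \mid$.
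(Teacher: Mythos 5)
Your proposal is correct and follows essentially the same route as the paper: invoke Remark \ref{rem923} to get $p=2$, $3\nmid |G|$ (hence $q\ge 5$, $r\ge 7$, $s\ge 11$), $T_2\ntriangleleft G$ and $|G'|\in\{qr,qs,rs\}$, then bound $\sigma(G)-|G|$ by a sum of reciprocal-index terms evaluated at the extremal primes, with a case split on the relative sizes of the two-prime products; your explicit enumeration of the nine admissible orders and selection of the six largest is just a more carefully bookkept version of the paper's $\frac{|G|}{2}+\frac{|G|}{5}+\frac{|G|}{10}+\frac{|G|}{14}+\cdots$ estimate. (One trivial slip: the subgroups of the cyclic quotient $C_{2m}$ do not form a chain, but your conclusion that there are exactly two normal subgroups of prime index is still correct.)
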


\begin{proof}
If $qr > 2s$, then in view of  Remark \ref{rem923}, we have
\begin{align*}
\sigma(G)- \mid G \mid \leq \frac{\mid G \mid}{2}+\frac{\mid G \mid}{5}+\frac{\mid G \mid}{10}+\frac{\mid G \mid}{14}+\frac{\mid G \mid}{22}+\frac{4\mid G \mid}{70} < \mid G \mid.
\end{align*}

Again, if $qr < 2s$, then in view of  (e) and (f) of Remark \ref{rem123}, we have
\begin{align*}
\sigma(G)- \mid G \mid \leq \frac{\mid G \mid}{2}+\frac{\mid G \mid}{5}+\frac{\mid G \mid}{10}+\frac{\mid G \mid}{14}+\frac{\mid G \mid}{35}+\frac{4\mid G \mid}{70} < \mid G \mid.\end{align*}
\end{proof}

Combining Lemma \ref{52}, Lemma \ref{511}, Lemma \ref{257}, Lemma \ref{757}, we now have the following theorem:

\begin{thm}\label{711}
If $G$ is a  group of  order $pqrs$, $p<q<r<s$, being  primes, then $G$ is not Leinster. 
\end{thm}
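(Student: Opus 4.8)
The plan is to fix a hypothetical Leinster group $G$ of order $pqrs$ and argue by a finite case analysis on the number $\tau(G)$ of normal subgroups. Because $\mid G\mid$ is squarefree, every normal subgroup of $G$ is determined uniquely by its order, so the entire problem reduces to deciding which divisors of $pqrs$ actually occur as orders of normal subgroups and then controlling the sum of those orders. The decisive first step is Lemma \ref{52}, which already pins $\tau(G)$ down to one of the three values $8$, $9$, $10$; granting it, the theorem reduces to deriving a contradiction in each of these three cases, which is precisely what Lemmas \ref{511}, \ref{257} and \ref{757} accomplish. Thus the only real work lies in proving those three lemmas, and I would organize that work uniformly.

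For each admissible value of $\tau(G)$ I would first extract the rigid structural skeleton recorded in Remarks \ref{rem133}, \ref{rem123} and \ref{rem923}. The facts to nail down in every case are: that $p=2$ (with $p$ odd every proper index is at least $3$, and a reciprocal estimate then shows $\sigma(G)$ cannot reach $2\mid G\mid$); that $3\nmid\mid G\mid$ (one runs through the few possibilities $\mid G'\mid=3r,3s$ and kills each using the parity constraint of Remark \ref{rem125}, namely that the odd-order normal subgroups come in an even count); that $\mid G'\mid\in\{qr,qs,rs\}$; and that $T_2\ntriangleleft G$ (otherwise Theorem \ref{mul} forces an extra even divisor into $\sigma(G)$). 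The identification of $\mid G'\mid$ rests on the cyclic normal $p$-complement $N$ of order $qrs$ supplied by Theorem \ref{sqfree}: since $G/N$ is cyclic we have $G'\leq N$, so $p\nmid\mid G'\mid$, while $G'$ is neither trivial, nor prime (by the $\tau$-count), nor of prime index (else $\mid G'\mid=qrs$ and Theorem \ref{zumud1} makes $G$ an even-order dihedral group, excluded by \cite[Example 2.4]{leinster}); hence $\mid G'\mid$ is a product of two of $q,r,s$.

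With this skeleton in hand the endgame is a single reciprocal inequality. Listing the orders of the proper normal subgroups as $\mid G\mid/k_i$ with $k_i>1$ their indices (the trivial subgroup contributing the term $1$), one writes
\begin{align*}
\sigma(G)-\mid G\mid \;=\; 1+\sum_i \frac{\mid G\mid}{k_i},
\end{align*}
and the goal is to show this quantity is strictly less than $\mid G\mid$, contradicting the Leinster condition $\sigma(G)=2\mid G\mid$. The only genuine subtlety is that the relative order of the two middle-sized divisors $qr$ and $2s$ governs which reciprocals dominate the sum, so each case splits into the subcases $qr>2s$ and $qr<2s$; in both the bound closes, exactly as displayed in Lemmas \ref{511}, \ref{257} and \ref{757}.

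I expect the main obstacle to be the case $\tau(G)=10$. It carries the largest number of normal subgroups, so eliminating the prime and prime-index possibilities for $\mid G'\mid$, and thereby forcing $\mid G'\mid\in\{qr,qs,rs\}$, demands the most delicate bookkeeping, including an explicit computation that shows $q>13$ in the stray prime subcases (this is the substance of the four parts of Remark \ref{rem923}). Once the structural constraints are secured, the closing inequalities are routine, and the theorem follows by assembling the three cases under the umbrella of Lemma \ref{52}.
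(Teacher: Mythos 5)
Your proposal follows exactly the paper's own route: Lemma \ref{52} pins $\tau(G)$ to $\{8,9,10\}$, the structural Remarks \ref{rem133}, \ref{rem123}, \ref{rem923} force $p=2$, $3\nmid\mid G\mid$ and $\mid G'\mid\in\{qr,qs,rs\}$, and the reciprocal inequalities of Lemmas \ref{511}, \ref{257}, \ref{757} (split on $qr\gtrless 2s$) close each case. This is the same decomposition and the same key estimates as in the paper, so there is nothing further to add.
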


We have the following result on Leinster group of order $p^3q$ for any primes $p, q$, which also follows from \cite[Proposition 3.8]
{maroti}.

\begin{prop}\label{911}
If $G$ is a Leinster group of order $p^3q$, $p, q$ being primes, then  $G \cong C_7 \rtimes C_8$. 
\end{prop}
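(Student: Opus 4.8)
The plan is to show that any Leinster group $G$ of order $p^3q$ must have a normal Sylow $q$-subgroup, read off the resulting structure $G=C_q\rtimes P$, and then isolate $C_7\rtimes C_8$. If $p=q$ then $|G|=p^4$ makes $G$ a $p$-group, hence nilpotent, and a nilpotent Leinster group is cyclic of perfect order by Theorem \ref{alg}; since a prime power is never perfect, there is nothing to prove, so I assume $p\neq q$. By Theorem \ref{alg}, $G$ is non-abelian, as $p^3q$ is not perfect. Every normal subgroup of $G$ has order dividing $p^3q$; write $a_d$ for the number of normal subgroups of order $d$, so that $a_1=a_{p^3q}=1$ and $a_q,a_{p^3}\le 1$ (Sylow).

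The first and decisive step is a congruence forcing $T_q\lhd G$. I would split the normal subgroups into those of $p$-power order (orders among $1,p,p^2,p^3$) and those of order divisible by $q$ (orders among $q,pq,p^2q,p^3q$). The sum of the orders in the first class is $\equiv 1\pmod p$, only the trivial subgroup contributing a term prime to $p$; the sum of the second class equals $q\bigl(a_q+p\,a_{pq}+p^2a_{p^2q}+p^3\bigr)\equiv q\,a_q\pmod p$. Since $\sigma(G)=2p^3q\equiv 0\pmod p$, adding the two classes gives $1+q\,a_q\equiv 0\pmod p$, which is impossible when $a_q=0$. Hence $a_q=1$, i.e.\ $T_q\lhd G$. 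Choosing a Sylow $p$-subgroup $P$ gives $T_q\cap P=1$ and $T_qP=G$, so $G=C_q\rtimes P$ with $|P|=p^3$.

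Next I would split on whether $P$ acts trivially on $T_q$. If the action is trivial then $G=P\times C_q$ with $P$ necessarily non-abelian (an abelian $P$ would make $G$ abelian); then $P$ has a unique normal subgroup of order $p$ and exactly $p+1$ of order $p^2$, so $\sigma(P)=1+p+(p+1)p^2+p^3=1+p+p^2+2p^3$, and by Theorem \ref{mul} the Leinster condition reads $(1+p+p^2+2p^3)(1+q)=2p^3q$, i.e.\ $(1+p+p^2+2p^3)+(1+p+p^2)q=0$, which is absurd. If the action is non-trivial, then $[C_q,P]=C_q\le G'$, so $G^{\mathrm{ab}}\cong P/P'=P^{\mathrm{ab}}$; by Theorem \ref{aqt} the abelian quotient $G^{\mathrm{ab}}$ is cyclic, hence $P^{\mathrm{ab}}$ is cyclic, and a $p$-group with cyclic abelianization is cyclic, forcing $P\cong C_{p^3}$. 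Thus $G\cong C_q\rtimes C_{p^3}$ with action image of order $p^{j}$, where $1\le j\le 3$.

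Finally, for $G\cong C_q\rtimes C_{p^3}$ the normal subgroups are the four containing $T_q$ (orders $q,pq,p^2q,p^3q$) together with the $p$-power normal subgroups, which are precisely the subgroups of the kernel $C_{p^{3-j}}$ of the action; hence $\tau(G)=4+(4-j)=8-j\le 7$. By Theorem \ref{sjb} a Leinster group of order $pqrs$ with $\tau\le 7$ is isomorphic to $C_7\rtimes C_8$, so $G\cong C_7\rtimes C_8$, and one checks directly that this group is indeed Leinster. I expect the main obstacle to be the congruence pinning down $T_q\lhd G$ and, in the non-trivial-action case, the identification $G^{\mathrm{ab}}\cong P^{\mathrm{ab}}$ that forces $P$ cyclic; the explicit value of $\sigma(P)$ and the count $\tau(G)=8-j$ are then routine bookkeeping.
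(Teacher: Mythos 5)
Your proof is correct, and while its skeleton matches the paper's (establish $T_q\lhd G$, force the Sylow $p$-subgroup to be cyclic, deduce $\tau(G)\le 7$, and invoke Theorem \ref{sjb} to land on $C_7\rtimes C_8$), you fill in with explicit arguments what the paper outsources to citations. The paper asserts $T_q\lhd G$ with ``one can easily verify,'' quotes \cite[Corollary 3.3]{maroti} for the cyclicity of $T_p$, quotes \cite[Theorems 3.4, 3.5]{baishya1} to get $\tau(G)=7$, and finishes with GAP; you instead prove $T_q\lhd G$ by the clean congruence $\sigma(G)\equiv 1+q\,a_q\pmod p$ (which even yields $q\equiv -1\pmod p$ for free), rule out the direct-product case by computing $\sigma(P)=1+p+p^2+2p^3$ for a non-abelian group of order $p^3$, obtain $P$ cyclic from Theorem \ref{aqt} via the identification $G'=C_qP'$ and $G^{\mathrm{ab}}\cong P^{\mathrm{ab}}$ together with the Burnside basis theorem, and count $\tau(G)=8-j\le 7$ directly from the kernel of the action. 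You also dispose of the degenerate case $p=q$ explicitly, which the paper leaves implicit. The net effect is a self-contained proof depending only on results already stated in the paper (Theorems \ref{aqt}, \ref{alg}, \ref{mul}, \ref{sjb}), at the cost of somewhat more elementary group-theoretic bookkeeping; all the individual steps check out.
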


\begin{proof}
In view of Theorem \ref{alg}, $G$ is non-abelian. In the present scenario,  one can easily verify that $p<q$ and $T_q \lhd G$. Now, by \cite[Corollary 3.3]{maroti}, we have $T_p$ is cyclic and consequently,  from \cite[Theorem 3.4, Theorem 3.5]{baishya1}, we have $\tau(G)=7$. But then, using remark \ref{rem125}, we have $p=2$ forcing $q=7$. Now, the result follows using GAP \cite{gap}.
\end{proof}

\begin{concl}\label{1711}
The objective of this research was to investigate the structure of Leinster groups of order $pqrs$ for any primes $p, q, r, s$ not necessarily distinct. It is well known that no group of order $p^2q^2$ is Leinster  \cite [Proposition 2.4]{baishya1}. On the other hand, in view of  Proposition \ref{911}, we have $C_7 \rtimes C_8$ is the only Leinster group of order $p^3q$.  In this paper,  for any primes $p<q<r<s$, we have studied the cases where order of $G$ is $p^2qr$ or $pqrs$. However, we were unable to give any information for Leinster groups of order $pq^2r$ and $pqr^2$. As such we leave it as an open question to our readers.
\end{concl}

\end{document}